\numberwithin{equation}{section}  
\DeclareMathAlphabet{\curly}{U}{rsfs}{m}{n}  
\theoremstyle{remark}
\theoremstyle{plain}
\newtheorem{lem}{Lemma}[section]
\newtheorem{thm}{Theorem}
\numberwithin{equation}{section}
\newcommand{\NN}{{\mathbb N}}
\renewcommand{\pmod}[1]{\allowbreak\mkern7mu({\operator@font mod}\,\,#1)}
\newcommand{\bal}{\[\begin{aligned}}
\newcommand{\eal}{\end{aligned}\]}
\newcommand{\be}{\begin{equation}}
\newcommand{\ee}{\end{equation}}
\newcommand{\ssum}[1]{\sum_{\substack{#1}}}  
\newcommand{\lam}{\ensuremath{\lambda}}
\renewcommand{\a}{\ensuremath{\alpha}}
\newcommand{\g}{\gamma}
\newcommand{\eps}{\ensuremath{\varepsilon}}
\renewcommand{\le}{\leqslant}
\renewcommand{\ge}{\geqslant}
\newcommand{\fl}[1]{{\ensuremath{\left\lfloor {#1} \right\rfloor}}}
\newcommand{\order}{\asymp}      
\renewcommand{\(}{\left(}
\renewcommand{\)}{\right)}
\newcommand{\pfrac}[2]{\left(\frac{#1}{#2}\right)}  
\begin{document}

\title{Integers divisible by a large shifted prime}
\author{Kevin Ford}
\date{}
\address{Department of Mathematics, 1409 West Green Street, University
of Illinois at Urbana-Champaign, Urbana, IL 61801, USA}
\email{ford@math.uiuc.edu}

\begin{abstract}
We determine the exact order of growth of $N(x,y)$, the number of integers $n\le x$ divisible by 
a shifted prime $p-1>y$, uniformly for all $x\ge 2y\ge 4$.
\end{abstract}

\thanks{
 Research supported in part by NSF grant DMS-1501982.}

\maketitle


\section{Introduction}

Let $N(x,y)$ be the number of integers $n\le x$ divisible by some number $p-1$, where $p>y$ is prime.
The problem of bounding $N(x,y)$ originated in 1980 with Erd\H os and Wagstaff \cite{EW}, who proved the upper bound
\be\label{EW}
N(x,y) \ll \frac{x}{(\log y)^c}, \qquad \text{some constant }c>0,
\ee
uniformly for $x>y>10$, and applied this estimate to the study of denominators of Bernoulli numbers.

In \cite{MPP}, the following improved estimates were shown.  Here $\log_2 x=\log\log x$, 
$\log_3 x =\log\log\log x$ and $\delta=1-\frac{1+\log\log 2}{\log 2} = 0.08607\ldots$.

\newtheorem*{thmA}{Theorem A}

\begin{thmA}[\cite{MPP}]\label{MPP}
(i) If $3\le y\le x$, then
 \[
   N(x,y) \ll \frac{x}{(\log y)^\delta (\log_2 y)^{1/2}},
 \]
and for every $\eps>0$ there is an $\eta>0$ so that for $3\le y\le x\exp\{-(\log x)^{1-\eta} \}$,
\[
 N(x,y) \gg \frac{x}{(\log y)^{\delta+\eps}}.
\]

(ii) If $y=x/\exp\{ (\log x)^\alpha \}$, and $\frac{1}{\log 4} \le \alpha \le 1 - \frac{\log_3 x}{\log_2 x}$,
then
\[
 N(x,y) = \frac{x(\log_2 x)^{O(1)}}{(\log x)^{\delta+\alpha-1-(\log \alpha)/\log 2}}.
\]

(iii)  If $y=x/\exp\{ (\log x)^\alpha \}$, and $0 \le \alpha \le \frac{1}{\log 4}$, then
\[
 N(x,y) = \frac{x \log(x/y) (\log_2 (x/y))^{O(1)}}{\log x}.
\]
\end{thmA}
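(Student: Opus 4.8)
\smallskip
\noindent\emph{Upper bound.} Every $n$ counted by $N(x,y)$ is divisible by some $p-1$ with $y<p\le x+1$, so, writing $\fl{x/(p-1)}=\#\{m\ge1:m(p-1)\le x\}$ and interchanging summations,
\[
N(x,y)\le\sum_{y<p\le x+1}\fl{x/(p-1)}=\sum_{m<x/y}\bigl(\pi(x/m+1)-\pi(y+1)\bigr).
\]
For $m<x/y$ one has $x/m>y$ and $\log(x/m)\asymp\log x$, so the prime number theorem makes each term $\ll(x/m)/\log x$, giving $N(x,y)\ll x\log(x/y)/\log x$. The same identity shows $\sum_{y<p\le x+1}\fl{x/(p-1)}\asymp x\log(x/y)/\log x$, so only the matching lower bound for $N$ itself is at issue.

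\smallskip
\noindent\emph{Lower bound; the plan.} Bound $N(x,y)$ below by the number of distinct values $(p-1)m$, where $p$ is prime, $p-1>y$, and $m$ ranges over a set $\mathcal M\subseteq[1,x/(2y)]$ to be chosen. Put $r(n)=\#\{(p,m):p\ \text{prime},\,p-1>y,\,m\in\mathcal M,\,(p-1)m=n\}$; then $N(x,y)\ge\#\{n\le x:r(n)\ge1\}\ge\bigl(\sum_n r(n)\bigr)^2\!\big/\sum_n r(n)^2$ by Cauchy--Schwarz, and the prime number theorem gives $\sum_n r(n)\asymp(x/\log x)\sum_{m\in\mathcal M}m^{-1}$. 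To reach the stated order one therefore needs $\sum_{m\in\mathcal M}m^{-1}\gg\log(x/y)/(\log_2(x/y))^{O(1)}$, which essentially forces $\mathcal M$ to be all of $[1,x/(2y)]$, or all $m\le x/(2y)$ with a prescribed $\Omega(m)\asymp\log_2(x/y)$. For the second moment, the diagonal is $\sum_n r(n)$; in an off-diagonal term $(p-1)m=(q-1)m'$ with $m\ne m'$, writing $g=\gcd(m,m')$, $m=ga$, $m'=gb$ forces $p-1=bc$ and $q-1=ac$ for a common $c$, so a Selberg or Brun sieve for the pair of linear forms $ac+1,\,bc+1$ in $c$ bounds the off-diagonal by
\[
\ll\frac{x}{\log^2 x}\sum_{m\ne m'\in\mathcal M}\frac{\gcd(m,m')\,\mathfrak S(m/g,m'/g)}{mm'},
\]
where $\mathfrak S$ is the associated singular series (pointwise $\ll\log_2 x$, and $O(1)$ on average).

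\smallskip
\noindent\emph{The main obstacle.} Everything reduces to showing that this weighted gcd-sum is $\ll\bigl(\sum_{m\in\mathcal M}m^{-1}\bigr)^2(\log_2(x/y))^{O(1)}$, i.e.\ that $\gcd(m,m')$ is only $(\log_2(x/y))^{O(1)}$ on $(mm')^{-1}$-average over $\mathcal M$. For $\mathcal M=[1,x/(2y)]$ that average has size $\log(x/y)=(\log x)^{\alpha}$, which swamps the diagonal once $\alpha>\tfrac12$; so a bare second moment only yields the theorem for $\alpha<\tfrac12$, whereas one must cover $0\le\alpha\le1/\log4$. The resolution I would pursue is the ``anatomy of integers'' input of Ford's work on the multiplication table and on $H(x,y,z)$: restrict $\mathcal M$ by prescribing $\Omega(m)$ (and perhaps barring small prime factors of $m$) and control a pair $m,m'$ scale by scale, through the number of prime factors of $m$ lying in each band $\bigl(\exp((\log x)^{a}),\exp((\log x)^{a'})\bigr]$, so that two elements of $\mathcal M$ can share only a tightly bounded divisor; the threshold $1/\log4$ should emerge from optimizing the permitted number of prime factors. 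Equivalently one may keep $\mathcal M=[1,x/(2y)]$ and discard the $n$ with $r(n)>V$ for $V=(\log_2(x/y))^{O(1)}$: showing this removes at most half of $\sum_n r(n)$ means bounding the number of pairs $(p,m)$ for which $(p-1)m$ has more than $V$ divisors $>y$ that are one less than a prime, which again needs the sieve-plus-divisor-sum estimates above (each extra prime divisor costing a factor $\asymp\log x$ in a many-linear-form sieve) in place of the crude second moment. Making those sieve bounds uniform in the linear forms, and making the scale decomposition efficient enough to preserve the power $(\log x)^{\alpha}$ while losing only $(\log_2(x/y))^{O(1)}$, is the technical heart of the argument.
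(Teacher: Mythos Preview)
Your proposal addresses only part (iii); parts (i) and (ii) require separate (though related) arguments, and you have not touched them.

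For (iii), your setup matches the paper's: count representations $n=(p-1)m$, use a first/second moment, and sieve the off-diagonal pairs. You correctly note that with $\mathcal M=[1,x/(2y)]$ the gcd-sum in the second moment is of size $(\log z)^3$ (where $z=x/y$), so the naive argument works only for $\alpha<1/2$, and that a restriction on $\Omega(m)$ is needed beyond that. The paper does exactly this.

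The genuine gap is your belief that restricting $\Omega(m)$ alone, or a finer scale-decomposition of $\mathcal M$, will push the threshold to $1/\log 4$. It will not. In your parametrization $(p-1)m=(q-1)m'$, $m=ga$, $m'=gb$, $p-1=bc$, $q-1=ac$, a restriction $\Omega(m),\Omega(m')\le w$ controls $g,a,b$ but places no constraint on $c$ beyond the two-linear-form sieve, which saves only $\log^{-2}x$. Optimizing $w$ and the Rankin parameter in $\sum_{\Omega(ga),\Omega(gb)\le w}(gab)^{-1}$ yields an off-diagonal bound of order $x(\log z)^{\beta}/\log^2 x$ with $\beta>1+\log 4$, and the method stalls at some $\alpha$ strictly below $1/\log 4$ (roughly $0.55$--$0.63$ depending on how carefully one optimizes). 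No amount of multiscale refinement of $\mathcal M$ fixes this, because the obstruction lives in $c$, which is a function of $p,q$ rather than of $m,m'$.

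The missing idea, which the paper supplies, is to \emph{also} impose $\Omega(p-1,z)\le w$. One then splits the common cofactor as $c=dh$ with $P^+(d)\le z<P^-(h)$; the smooth part $d$ now satisfies $\Omega(d)\le w$ and joins the Rankin-bounded sum over $a,b,c,d$ with $\Omega(abcd)\le 2w$, while the sieve is performed over the rough variable $h$ with $P^-(h)>z$, gaining an extra factor $1/\log z$. With $w\approx\log_2 z$ the Rankin bound $\sum 2^{2w-\Omega(abcd)}/(abcd)\ll 2^{2w}(\log z)^2$ together with that extra $1/\log z$ gives
\[
M_2' \ll \frac{x(\log_2 z)^{O(1)}}{(\log z)\log^2 x}\cdot 2^{2w}(\log z)^2
      \ll \frac{x(\log z)^{1+\log 4}(\log_2 z)^{O(1)}}{\log^2 x},
\]
and $M_2'=o(M_1)$ precisely when $(\log z)^{\log 4}=o(\log x)$, i.e.\ $\alpha<1/\log 4$. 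So the threshold $1/\log 4$ comes not from optimizing the cutoff on $\Omega(m)$, but from the symmetric cutoff on $\Omega(p-1,z)$ and the resulting rough-variable sieve.
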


The authors remark (Remark 2.11 of \cite{MPP}) that they can very easily establish the following with their methods:
For any $\eps>0$, if $y> x / \exp\{ (\log x)^{1/2-\eps} \}$ and $x/y\to \infty$, then
\[
N(x,y) \sim  \frac{x \log(x/y)}{\log x}.
\]
The authors also claim in [Remark 2.10]\cite{MPP} that they can sharpen (iii) to $N(x,y) \order \frac{x\log(x/y)}{\log x}$
by taking more care of the ``singular series'' factor coming from a sieve estimate.  As we shall see below, this
is a delicate matter.

\medskip

In this paper, we determine the correct order of magnitude for $N(x,y)$ uniformly for all $x,y$
and show an asymptotic for $N(x,y)$ in most of the range (iii) of Theorem A.  As in \cite{MPP}, 
define $\alpha$ implicitly by $y=x/\exp \{ (\log x)^\alpha \}$, so that $0\le \alpha \le 1$ in the range $1\le y \le x/e$.
Near the threshold value $\alpha=\frac{1}{\log 4}$, define $\theta$ by 
\[
 \alpha = \frac{1}{\log 4} + \frac{\theta}{\sqrt{\log_2 x}}.
\]

\begin{thm}\label{main}
 We have
 (i) For $3 \le y\le x^{1-c}$, where $c>0$ is an arbitrary fixed constant,
 \[
  N(x,y) \order_c  \frac{x}{(\log y)^\delta (\log_2 y)^{1/2}}.
 \]

(ii) When $\frac{1}{\log 4} \le \alpha \le 1 - \frac{\log 2}{\log_2 x}$ (the upper bound is equivalent to $y\ge x^{1/2}$), then
\[
 N(x,y) \order \frac{x}{\max(1,\theta) (\log x)^{\delta+\alpha-1-(\log \alpha)/\log 2}}.
\]

(iii) If $x/y\to \infty$ and $\theta\to-\infty$ (in particular, $0<\a < \frac{1}{\log 4}$), then 
\[
 N(x,y) \sim \frac{x \log(x/y)}{\log x}.
\]
Uniformly in the slightly larger range $x/\exp\{(\log x)^{1/\log 4} \} \le y \le x/2$, $x\ge 10$, we have
\[
 N(x,y) \order \frac{x \log(x/y)}{\log x}.
\]
\end{thm}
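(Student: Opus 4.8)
The plan is to take the upper bound in (iii) from the union bound, which is already sharp here. Set $\nu(n):=\#\{d\mid n:\,d>y,\ d+1\text{ prime}\}$, so that $N(x,y)=\#\{n\le x:\nu(n)\ge1\}$ and
\[
 N(x,y)\ \le\ \sum_{n\le x}\nu(n)\ =\ \sum_{\substack{y<d\le x\\ d+1\text{ prime}}}\fl{\frac{x}{d}}\ =:\ S_1 .
\]
Since $\#\{d\le t:\,d+1\text{ prime}\}\le\pi(t+1)\ll t/\log t$, partial summation gives $S_1\ll x\log\tfrac{\log x}{\log y}+x/\log x$, and as $\a\le1/\log4$ is bounded away from $1$ while $\log(x/y)\ge\log2$, this is $\ll x\log(x/y)/\log x$ --- the upper bound claimed in the extended range. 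In (ii) the upper bound is Theorem A(ii), up to a factor $(\log_2 x)^{O(1)}$ sharpened in the anatomy step below.

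\textbf{Lower bound in (iii): reduction.} The prime number theorem (or Chebyshev's bound, for the order-of-magnitude assertion) refines the same partial summation to $S_1\sim x\log\tfrac{\log x}{\log y}\sim x(\log x)^{\a}/\log x=\frac{x\log(x/y)}{\log x}$, using $\log(x/y)=(\log x)^{\a}\to\infty$. As $S_1=\sum_{n\le x}\nu(n)$ and $N(x,y)=\#\{n:\nu(n)\ge1\}$, one has $S_1-N(x,y)=\sum_{\nu(n)\ge2}(\nu(n)-1)$, so I would reduce the lower bound to proving $\sum_{n\le x,\ \nu(n)\ge2}\nu(n)=o(S_1)$ for the asymptotic, resp.\ $\le(1-c)S_1$ with a fixed $c>0$ for the order of magnitude.

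\textbf{The main obstacle: multiplicity of good divisors.} The first idea would be Bonferroni at depth two, $N(x,y)\ge S_1-D$ with $D:=\sum_{n\le x}\binom{\nu(n)}{2}=\sum_{\substack{y<d_1<d_2\le x\\ d_i+1\text{ prime}}}\fl{x/[d_1,d_2]}$. Every pair contributing a nonzero term satisfies $y^2/\gcd(d_1,d_2)<d_1d_2/\gcd(d_1,d_2)=[d_1,d_2]\le x$, hence $\gcd(d_1,d_2)>y^2/x$; parametrising by $g=\gcd(d_1,d_2)$, $d_i=ge_i$, and bounding the count of $e$ with $ge+1$ prime by a sieve, one can estimate $D$ and obtains $D=o(S_1)$ in a partial range --- essentially $\a<\tfrac12$, which is essentially the range of Remark 2.11 of \cite{MPP}. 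For the full range $\theta\to-\infty$, though, $D$ is too large (there are too many pairs with abnormally large gcd), and one must instead bound the lighter quantity $\sum_{\nu(n)\ge2}\nu(n)$ directly. This I expect to be the technical heart of part (iii): it calls for the ``anatomy of integers'' large-deviation machinery (as developed for the multiplication-table problem and used in \cite{MPP}) to control the tail of $\nu(n)$ --- equivalently, of the number of divisors of $n$ exceeding $y$ --- combined with a sieve imposing the conditions $d+1$ prime on those divisors, whose ``singular series'' factor must be handled with the right constant. This is precisely the delicacy signalled after Theorem A: $\a=1/\log4$ is the threshold at which $\sum_{\nu(n)\ge2}\nu(n)$ passes from $o(S_1)$ to a positive proportion of $S_1$, and the hypothesis $\theta\to-\infty$ --- $\a$ well below $1/\log4$ on the scale $1/\sqrt{\log_2 x}$ --- provides exactly the margin needed to keep it $o(S_1)$. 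The same computation, run near the threshold, yields the factor $1/\max(1,\theta)$ in part (ii).

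\textbf{Parts (i) and (ii).} For the lower bound in (i) and both bounds in (ii), which lie squarely in this regime, I would follow the anatomy method of Ford's multiplication-table work as developed in \cite{MPP}: writing each counted $n$ as $a(p-1)$ and analysing, via iterated large-deviation estimates, the joint distribution of the numbers of prime factors of $p-1$ in geometric intervals, one matches the upper bound of Theorem A(i),(ii) from below by exhibiting a subfamily of such $n$ counted with bounded multiplicity. Replacing the $(\log_2 x)^{O(1)}$ of Theorem A(ii) by the precise $1/\max(1,\theta)$, and pinning the power of $\log_2 y$ in (i) to $1/2$, should come from carrying the relevant second moment --- over the good divisors $d\mid n$ with $d>y$ and $d+1$ prime --- with exact constants; the freedom to use \emph{any} divisor exceeding $y$, rather than one confined to a single dyadic block as in the classical problem, is what reduces the power of $\log_2 y$.
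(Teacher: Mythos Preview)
Your upper bound in (iii) and the small-$\alpha$ Bonferroni argument match the paper. The genuine gap is your plan for the full range $\theta\to-\infty$: you propose to ``bound the lighter quantity $\sum_{\nu(n)\ge2}\nu(n)$ directly'' via large-deviation machinery, but give no mechanism, and the only elementary relation available is $\sum_{\nu\ge2}\nu\le\sum\nu(\nu-1)=2D$, which puts you back at the Bonferroni bound you already know is too crude. The paper's device is concrete and different: it replaces $\nu(n)$ by a \emph{restricted} count $r(n)$, tallying only representations $n=(p-1)m$ with $\Omega(m)\le w$ and $\Omega(p-1,z)\le w$, where $w\approx\log_2 z-(\theta/2)\sqrt{\log_2 z}$. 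Tail bounds of Hal\'asz/Timofeev type show almost all representations satisfy these constraints, so $M_1=\sum r(n)\sim S_1$; but in the second-moment decomposition $(p_1-1)m_1=(p_2-1)m_2$ with $g=(p_1-1,p_2-1)=dh$, $P^+(d)\le z<P^-(h)$, the constraint now forces $\Omega(abcd)\le 2w$ and in particular $d\le z^{w}\le x^{1/10}$, so a two-dimensional sieve on $h$ (with $bdh+1$, $cdh+1$ both prime) saves the full $(\log x)^2$. A Rankin-type sum over $a,b,c,d$ under $\Omega(abcd)\le2w$ then gives $M_2'=o(M_1)$ exactly when $\theta\to-\infty$; the same computation at the threshold produces the $1/\max(1,\theta)$ in (ii).

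For (i) and (ii) the paper uses $N\ge M_1^2/M_2$ with a similarly restricted $r(n)$ (in case (i), also $y<p<y^{1.1}$ and the cofactor $m=kh$ with $k\le y^{1/10}$, $P^-(h)>y^{1.1}$; this last extra factor is what gives $(\log_2 y)^{-1/2}$ rather than $(\log_2 y)^{-3/2}$). The singular-series issue you flag is handled by a dedicated lemma bounding $\sum_{\Omega(abcd)\le w}\frac{1}{abcd}\bigl(\tfrac{d}{\phi(d)}\bigr)^2\tfrac{b}{\phi(b)}\tfrac{c}{\phi(c)}\tfrac{b-c}{\phi(b-c)}$: the awkward factor $\frac{b-c}{\phi(b-c)}$ is treated by Cauchy--Schwarz and a Brun--Titchmarsh-in-progressions estimate on $\sum_{b\equiv c\,(l),\,\Omega(b)\le t}1/b$. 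Your high-level picture is right, but the $\Omega$-restriction on $r(n)$ and this singular-series lemma are the specific missing ideas without which the argument does not close.
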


\textbf{Remarks.}  In part (ii) of Theorem \ref{main}, if $\frac{1}{\log 4}<\alpha<1$ is fixed,
then $\theta \order \sqrt{\log_2 x}$.

Our proof of Theorem \ref{main} parts (ii) and (iii) refines the method offered in \cite{MPP}.  To prove the 
lower bound for part (i), we do not follow the method from \cite{MPP} (which is based on the theory of the Carmicharel $\lambda$-function),
but rather use a technique which is similar 
to that used in part (ii).  The resason that this works is described in the next section.

\medskip
\textbf{Notation}
$\omega(n)$ and $\Omega(n)$ denote the number of prime factors of $n$ and the number of prime power factors of $n$, respectively.
$\Omega(n,t)$ is the number of prime power divisors $p^a$ of $n$ with $p\le t$.
$\Omega^*(n,t)$ is the number of prime power divisors $p^a$ of $n$ with $2<p\le t$.
$P^+(n)$ and $P^-(n)$ denote the largest and smallest prime factors of $n$, respectively.

%
%
\section{Heuristic discussion}
%
%

The quantity $N(x,y)$ counts integers with a particular type of divisor, thus results about
the distribution of divisors of
integers, say from \cite[Ch. 2]{HT} or \cite{F}, may be relevant to the problem.
To bound the density of integers possessing a divisor in an interval $(y,z]$,  
the right ``measuring stick'' for the problem is sum of the densities of the integers which 
are divisible by each candidate divisor, namely the quantity $\eta := \sum_{y<d\le z} 1/d \sim \log (z/y)$.
When $\eta$ is very small, the ``events'' $d|n$ for the various $d$ are essentially independent
and the likelihood of an integer having such a divisor is about $\eta$; this independence persists below
a threshhold value of about $\eta=(\log y)^{1-\log 4}$.  As $\eta$ grows, however, these 
events become more and more dependent and when $\eta \approx 1$, the likelihood that an integer has a divisor in $(y,z]$
has dropped to about
$(\log y)^{-\delta} (\log_2 y)^{-3/2}$; moreover, the most likely integers to have such a divisor are those with
$\frac{\log_2 y}{\log 2}+O(1)$ prime divisors $\le y$, and also these prime factors must be ``nicely'' 
distributed (if not, then the divisors of $n$ are highly clustered and there is a much lower probability of
having a divisor in $(y,x]$).   When $\eta=(\log y)^{-\beta}$, with $0\le \beta \le \log 4-1$,
most integers with a divisor in $(y,z]$ have  $\Omega(n,y) = \frac{1+\beta}{\log 2}\log_2 y+O(1)$; a
heuristic explaining this may be found in \S 1.5 of \cite{F}.

For $N(x,y)$ the analogous ``measuring stick'' is the quantity $\nu = \sum_{y<p\le x} \frac{1}{p-1}$.
When $x^{1/2}<y<x/\exp\{(\log x)^{1/\log 4} \}$, $(\log (x/y))^{1-\log 4} \ll \nu \ll 1$ and this roughly 
corresponds to the ``short but not too short interval'' case for unrestricted divisors,
with $\nu$ replacing $\eta$ and $x/y$ replacing $z$ (because $x/y$ is roughly the size of the smaller factor of $n$
in this case, and that is the deciding quantity).  One might guess that the density is then given by Theorem 1
of \cite{F}, but this is not quite the case.  Because the interval $(y,z]$ in the unrestricted case is genuinely
very short, integers with typical distribution of their prime factors have very non-uniform divisor distribution (lots of tight clusters),
and this makes it highly unlikely to have a divisor in $(y,z]$.  Thus, most integers with a divisor in $(y,z]$
have an atypical prime factor distribution.  In the case of shifted prime factor divisors, the interval $(y,x]$
is very long, and this issue does not affect whether $(y,x]$ has a shifted prime divisor
and the actual liklihood is are therefore a bit larger (by roughly factor $\log_2 y$).
This also makes it much easier to obtain sharper bounds for $N(x,y)$, as delicate divisor distribution
issues do not need to be dealt with.

\medskip

The techniques of this paper may be easily adapted to obtain sharp estimates for the number of integers $n\le x$
divisible by an integer $k>y$ which comes from an arbitrary set $S$ which is ``thin'', in the sense that that
sum of reciprocals of elements of $S$ diverges very slowly like that of primes, and for which the set has 
nice distribution in arithmetic progressions (in order to apply sieve methods and obtain, e.g. analogs of the 
Timofeev bounds from the next section).

%
\section{Tools from the anatomy of integers}
%

Beginning with the work of Hardy-Ramanujan (1917), and continuing with work of Erd\H os and others in the 1930s
and beyond, it is now well-known that the prime factors of integers, viewed on a $\log\log$-scale, behave like
a Poisson process. In particular, the number of prime factors which are $\le z$ behaves roughly like a Poisson random
variable with parameter $\sim \log_2 z$ as $z\to\infty$.

\begin{lem}\label{Selberg}
 For any fixed $\delta>0$, we have uniformly for $x\ge 4$, $0\le k\le (2-\delta)\log_2 x$ that
 \[
\ssum{n\le x \\ \Omega(n)=k} \frac{1}{n} \asymp_\delta \frac{(\log_2 x)^k}{k!}  
 \]
\end{lem}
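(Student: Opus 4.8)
The plan is to estimate the Dirichlet series $F(s) := \sum_{n\ge 1, \Omega(n)=k} n^{-s}$ via its generating function in an auxiliary variable, then extract the $k$-th coefficient. First I would introduce, for a complex parameter $z$ with $|z|\le 2-\delta'$, the series
\[
 D(s,z) := \sum_{n\ge 1} \frac{z^{\Omega(n)}}{n^s} = \prod_p \(1 - \frac{z}{p^s}\)^{-1}
   = \zeta(s)^z \cdot G(s,z),
\]
where $G(s,z) = \prod_p (1-p^{-s})^z (1-z p^{-s})^{-1}$ is holomorphic and bounded (uniformly in $z$ on the disc $|z|\le 2-\delta'$) for $\Re s > 1/2$, say, and in particular near $s=1$. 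The point is that $\sum_{n\le x,\Omega(n)=k} 1/n$ is, up to the usual partial-summation/truncation at $x$, the coefficient of $z^k$ in a weighted sum of $z^{\Omega(n)}/n$ over $n\le x$.

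The cleanest route avoids contour integration entirely and uses instead the elementary recursion / convolution identity $\Omega(mn)=\Omega(m)+\Omega(n)$ together with known estimates for $\sum_{n\le x} z^{\Omega(n)}/n$. Concretely, I would prove the upper and lower bounds separately. For the \textbf{upper bound}, fix $z=k/\log_2 x$ (the saddle point); since $0\le k\le (2-\delta)\log_2 x$ we have $0\le z\le 2-\delta$, so by Mertens-type estimates
\[
 \sum_{n\le x} \frac{z^{\Omega(n)}}{n} \ll_\delta (\log x)^{z},
\]
and then, since every term is nonnegative,
\[
 \sum_{\substack{n\le x\\ \Omega(n)=k}} \frac1n \;=\; z^{-k}\sum_{\substack{n\le x\\\Omega(n)=k}}\frac{z^{\Omega(n)}}{n}
  \;\le\; z^{-k}\sum_{n\le x}\frac{z^{\Omega(n)}}{n} \ll_\delta \frac{(\log x)^{z}}{z^{k}}
  = \frac{(\log_2 x)^k}{k^k} e^{k} \cdot (\log_2 x)^{O(1)}\cdot(\ldots),
\]
which after Stirling is $\asymp (\log_2 x)^k/k!$ up to the polynomial-in-$\log_2 x$ slack that one must then remove. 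Removing that slack is where the real work lies: the naive saddle-point choice loses a factor like $\sqrt{k}$, so one needs the more precise asymptotic
\[
 \sum_{n\le x} \frac{z^{\Omega(n)}}{n} = C(z)(\log x)^{z}\(1+O_\delta\!\(\tfrac1{\log x}\)\) + (\text{error})
\]
with $C(z)$ bounded above and below, valid uniformly for $0<z\le 2-\delta$ — this is the Selberg–Delange / Sathe input, provable here by induction on $k$ using the identity $k\sum_{\Omega(n)=k}1/n = \sum_{\Omega(m)=k-1}\sum_{p^a} 1/(m p^a)$ restricted appropriately, or by a short contour argument moving the line of integration past $s=1$ and using that $G(s,z)$ stays bounded. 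For the \textbf{lower bound}, I would restrict to $n$ that are squarefree and composed only of primes in $(\,w, \sqrt{x}\,]$ for a suitable small fixed $w$, so that $\Omega(n)=\omega(n)$ and $n\le x$ is automatic once $\omega(n)=k$ and (say) $k\le (2-\delta)\log_2 x$; counting such $n$ with exactly $k$ prime factors gives a sum $\gg (\log_2 x)^k/k! \cdot (1-o(1))^k \gg_\delta (\log_2 x)^k/k!$ by the multinomial/Mertens estimate $\sum_{w<p\le\sqrt x} 1/p = \log_2 x + O_w(1)$, provided $w$ is chosen large enough in terms of $\delta$ so that the $O_w(1)$ is absorbed; the constraint $k\le(2-\delta)\log_2 x$ is exactly what guarantees the saddle point $z\le 2-\delta$ stays in the region where $D(s,z)$'s Euler product converges well, i.e. where no competing singularity at $s=\log z/\log 2$-type phenomena spoil the estimate.

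The main obstacle, as flagged above, is obtaining the asymptotic for $\sum_{n\le x} z^{\Omega(n)}/n$ with an error term strong enough (and uniform enough in $z$ up to $2-\delta$) that, after dividing by $z^k$ and optimizing, the spurious polynomial factors in $\log_2 x$ cancel and leave the clean $\asymp (\log_2 x)^k/k!$. Equivalently, one needs the Selberg–Delange method with uniformity in the exponent $z$ on the whole interval $(0,2-\delta]$; the restriction to $z<2$ is essential because at $z=2$ the Euler factor $(1-2p^{-s})^{-1}$ develops a pole hitting the region $\Re s>1$, destroying the bound — this is precisely why the hypothesis reads $k\le(2-\delta)\log_2 x$. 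I expect this step to be standard but the one requiring genuine care; everything else (Stirling, Mertens, the squarefree lower-bound construction) is routine.
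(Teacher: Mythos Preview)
The paper's own proof is a one-line citation: it invokes Selberg's classical asymptotic for $\pi_k(x)=\#\{n\le x:\Omega(n)=k\}$ (Sathe--Selberg, as in Tenenbaum, Theorem~II.6.5), from which the reciprocal sum follows immediately by partial summation. So you are proposing to do far more work than the paper does; that is fine, but you should be aware that the intended argument is simply ``quote Sathe--Selberg and integrate''.

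Your upper-bound sketch is essentially correct. The Rankin choice $z=k/\log_2 x$ loses a factor $\sqrt{k}$, as you note, but the fix is lighter than full Selberg--Delange: integrate over the circle $|v|=k/\log_2 x$ (exactly as the paper does later in the proof of Lemma~\ref{recip}) and use $\sum_{n\le x} v^{\Omega(n)}/n \ll (\log x)^{\Re v}$ together with $\int_0^{2\pi} e^{k\cos\theta}\,d\theta \asymp e^k/\sqrt{k}$; Stirling then gives $\ll (\log_2 x)^k/k!$ with no slack.

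Your lower-bound construction, however, has a genuine gap. Restricting to squarefree $n$ with all prime factors in $(w,\sqrt{x}\,]$ does \emph{not} make $n\le x$ automatic once $\omega(n)=k$: a product of $k$ primes each near $\sqrt{x}$ has size about $x^{k/2}$. If instead you restrict the primes to $(w,x^{1/k}]$ so that $n\le x$ really is forced, the Mertens sum becomes $\sum_{w<p\le x^{1/k}}1/p=\log_2 x-\log k+O_w(1)$, and raising $(\log_2 x-\log k)^k$ to compare with $(\log_2 x)^k$ costs a factor $\exp\bigl(-k\log k/\log_2 x+O(k(\log k)^2/(\log_2 x)^2)\bigr)$; for $k$ near $(2-\delta)\log_2 x$ this is $\asymp (\log_2 x)^{-(2-\delta)+o(1)}$, a genuine polynomial-in-$\log_2 x$ loss, not a constant depending only on $\delta$. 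So this construction cannot deliver the matching lower bound. The clean route is the paper's: take Sathe--Selberg for $\pi_k(t)$ (which already has the correct order uniformly in $k\le(2-\delta)\log_2 t$) and compute $\sum_{n\le x,\Omega(n)=k}1/n=\int_{1}^{x}\pi_k(t)\,t^{-2}\,dt+O(1)$.
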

\begin{proof}
This is a corollary of a classical result of Selberg (see \cite[Theorem II.6.5]{Ten}) 
about the distribution of $\Omega(n)$.
\end{proof}

The next two lemmas are due to Hal\'asz \cite{Ha}, with an extension of Hall and Tenenbaum \cite[Theorem 08]{HT}.

\begin{lem}\label{Halup}
  Fix $\delta>0$.  Uniformly for $x\ge z\ge 3$ and $0\le m\le (2-\delta)\log_2 z$,  we have
 \begin{align*}
  \# \{n\le x : \Omega(n,z)=m \} &\ll_{\delta} \frac{x(\log_2 z)^m}{m! \log z},\\
  \ssum{n\le x \\ \Omega(n,z)=m} \frac{1}{n} &\ll_{\delta} \frac{\log x}{\log z} \, \frac{(\log_2 z)^m}{m!}.
 \end{align*}
Uniformly for $x\ge z\ge 3$ and  $0\le m\le (3-\delta)\log_2 z$,
\begin{align*}
  \# \{n\le x : \Omega^*(n,z)=m \} &\ll_{\delta} \frac{x(\log_2 z)^m}{m! \log z},\\
  \ssum{n\le x \\ \Omega^*(n,z)=m} \frac{1}{n} &\ll_{\delta} \frac{\log x}{\log z} \, \frac{(\log_2 z)^m}{m!}.
 \end{align*}
\end{lem}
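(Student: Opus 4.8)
These are essentially the bounds of Hal\'asz \cite{Ha}, in the uniform form of Hall--Tenenbaum \cite[Theorem 08]{HT}; I outline the argument I would give, indicating where the difference between the two admissible ranges of $m$ comes from. The plan is to peel off the ``$z$-part'' of $n$: writing $n=ab$ with $P^+(a)\le z<P^-(b)$ one has $\Omega(n,z)=\Omega(a)$, so, with $\Phi(t,z):=\#\{b\le t:P^-(b)>z\}$,
\begin{gather*}
 \#\{n\le x:\Omega(n,z)=m\}=\ssum{a\le x\\ P^+(a)\le z\\ \Omega(a)=m}\Phi(x/a,z),\\
 \ssum{n\le x\\ \Omega(n,z)=m}\frac1n=\ssum{a\le x\\ P^+(a)\le z\\ \Omega(a)=m}\frac1a\ssum{b\le x/a\\ P^-(b)>z}\frac1b.
\end{gather*}
I would dispose of the cofactor by the classical sieve/Mertens bounds $\Phi(t,z)\ll t/\log z$ (for $z\le t$) and $\sum_{b\le t,\,P^-(b)>z}1/b\ll 1+\log t/\log z$, the latter being $\ll\log x/\log z$ uniformly since $z\le x$. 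Both statements then come down to the ``smooth sum''
\[
 S_m(z):=\ssum{P^+(a)\le z\\ \Omega(a)=m}\frac1a=[w^m]\prod_{p\le z}\Bigl(1-\frac wp\Bigr)^{-1},
\]
and the estimate I need is $S_m(z)\ll_\delta(\log_2 z)^m/m!$ for $0\le m\le(2-\delta)\log_2 z$.

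For that, I would factor the Euler product as $\prod_{p\le z}(1-w/p)^{-1}=e^{wL}H(w)$ with $L:=\sum_{p\le z}1/p=\log_2 z+O(1)$ and $H(w)=\exp\bigl(\sum_{p\le z}\sum_{k\ge2}w^k/(kp^k)\bigr)$. For each fixed $\rho<2$ the exponent of $H$ is bounded on $|w|\le\rho$ uniformly in $z$ (the $p=2$ term is $O_\rho(1)$, the tail over $p\ge3$ converges), so $H$ is holomorphic there, has nonnegative Taylor coefficients, and $[w^i]H(w)\ll_\rho\rho^{-i}$ by Cauchy's estimates. Convolving the two factors gives $S_m(z)=\sum_{i=0}^m\frac{L^{m-i}}{(m-i)!}[w^i]H(w)$, whose $i$th term is $\ll_\rho\frac{L^m}{m!}\bigl(\frac{m}{\rho L}\bigr)^i$; choosing $\rho$ with $2-\delta<\rho<2$ makes $m<\rho L$ (once $z$ is large in terms of $\delta$; small $z$ is trivial), so the geometric series converges and $S_m(z)\ll_\delta L^m/m!\ll_\delta(\log_2 z)^m/m!$.

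With $S_m(z)$ available, the bound on $\sum_{n\le x,\,\Omega(n,z)=m}1/n$ is immediate. For the counting function I would split the $a$-sum at $x/z$: the range $a\le x/z$ contributes $\ll\frac{x}{\log z}S_m(z)$, and the boundary range $x/z<a\le x$, where $\Phi(x/a,z)=1$, contributes $\#\{a\le x:P^+(a)\le z,\ \Omega(a)=m,\ a>x/z\}$; this last I would bound by induction on $m$ via the Tur\'an--Kubilius-type inequality $m\,\#\{n\le t:\Omega(n,z)=m\}\le\sum_{p\le z}\sum_{1\le j\le m}j\,\#\{n\le t/p^j:\Omega(n,z)=m-j\}$ (the $j=1$ term being dominant), or else simply quote \cite[Theorem 08]{HT}. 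The $\Omega^*$ statements follow identically after writing $n=2^v a b$ with $a$ odd, $P^+(a)\le z<P^-(b)$, and summing $\sum_{v\ge0}2^{-v}=2$; the only change is that $a$ now runs over odd integers, so $S_m$ is replaced by $[w^m]\prod_{2<p\le z}(1-w/p)^{-1}$, and its analytic factor $H$ is now holomorphic on the larger disc $|w|<3$ (the obstruction at $p=2$ being gone). The argument above then works for every $\rho<3$, which is precisely what widens the range to $0\le m\le(3-\delta)\log_2 z$.

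The main obstacle is the estimate for $S_m(z)$: a crude Rankin bound $S_m(z)\le t^{-m}\prod_{p\le z}(1-t/p)^{-1}$, optimised at $t=m/L$, loses a spurious factor $\asymp\sqrt m$ and does not reach the end of the range, so one really must exploit the holomorphy of the ``bounded'' factor $H$ --- equivalently, the location of the nearest singularity of the Euler product, at $w=2$ in general and at $w=3$ once the prime $2$ is excluded. The only other technical nuisance is the boundary range $x/z<a\le x$ in the counting estimate, where the sieve bound for $\Phi(x/a,z)$ degenerates; this is handled by the elementary induction above.
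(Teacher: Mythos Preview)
The paper does not give a proof here; the lemma is simply attributed to Hal\'asz and to Hall--Tenenbaum \cite[Theorem~08]{HT}. Your sketch is a correct proof, and its organisation differs from the one the paper has in mind. The Hall--Tenenbaum argument (which the paper does reproduce, in its proof of Lemma~\ref{recip}) works directly on the generating sum: one writes
\[
\#\{n\le x:\Omega(n,z)=m\}=\frac{1}{2\pi r^m}\int_0^{2\pi}e^{-im\theta}\sum_{n\le x}(re^{i\theta})^{\Omega(n,z)}\,d\theta,
\]
bounds the inner sum by $\ll x(\log z)^{\Re v-1}$ via a Hal\'asz/Shiu mean-value estimate, sets $r=m/\log_2 z$, and uses $\int_0^{2\pi}e^{k\cos\theta}\,d\theta\ll e^k/\sqrt k$. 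This handles the counting function and the reciprocal sum in one stroke and never needs to isolate a boundary range. You instead strip off the $z$-rough cofactor, reduce everything to the smooth reciprocal sum $S_m(z)$, and estimate that by factoring $\prod_{p\le z}(1-w/p)^{-1}=e^{wL}H(w)$ and applying Cauchy bounds to the analytic factor $H$. Both routes rest on the same analytic fact---the nearest singularity of the Euler product sits at $w=2$, or at $w=3$ once $p=2$ is removed---and you correctly pinpoint this as the source of the two admissible ranges for $m$. Your organisation makes that threshold more transparent; the circle-method organisation is slicker for the counting function because it avoids the boundary piece $x/z<a\le x$ that you have to patch separately. On that last point: your fallback ``simply quote \cite[Theorem~08]{HT}'' is circular, since Theorem~08 \emph{is} the counting bound at $z=x$; the inductive route does work, though the recursion you wrote needs a small correction (the natural identity $m\,\#\{n\le t:\Omega(n,z)=m\}=\sum_{p\le z}\sum_{j\ge 1}\#\{n'\le t/p^j:\Omega(n',z)=m-j\}$ carries no factor $j$ on the right).
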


\begin{lem}\label{Hallow}
  Fix $\delta>0$.  Uniformly for $x\ge z\ge 3$ and $\delta \log_2 z\le m\le (2-\delta)\log_2 z$, we have
\[
  \# \big\{ n\le x : \Omega(n,z)\in \{m,m+1\} \big\} \gg_\delta \frac{x(\log_2 z)^m}{m! \log z}.
\]
\end{lem}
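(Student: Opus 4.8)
The plan is to build a large family of integers $n\le x$ whose value of $\Omega(n,z)$ is one of the two admissible ones, by writing $n=ab$ with $a$ the largest divisor of $n$ composed only of primes $\le z$ and $b=n/a$ satisfying $P^-(b)>z$. This is a bijection between positive integers $n$ and coprime pairs $(a,b)$ with $P^+(a)\le z<P^-(b)$, and every such $n$ has $\Omega(n,z)=\Omega(a)$. I may assume $z$ exceeds a suitable constant $z_0(\delta)$, since for bounded $z$ the set $\{n:\Omega(n,z)=m\}$ has positive density and the lower bound is immediate. The two ranges $x\ge 2z^{3/2}$ and $z\le x\le 2z^{3/2}$ will be handled differently.

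First suppose $x\ge 2z^{3/2}$. Keeping only the pairs with $a\le\sqrt z$ and $\Omega(a)=m$, each of which contributes an $n$ with $\Omega(n,z)=m$, I get
\[
\#\{n\le x:\Omega(n,z)=m\}\ge\sum_{\substack{a\le\sqrt z\\ \Omega(a)=m}}\#\{b\le x/a:\,P^-(b)>z\}.
\]
Two standard estimates then finish this range. For the inner count, $\#\{b\le Y:P^-(b)>z\}\gg Y/\log z$ whenever $Y\ge 2z$: for $z$ below a fixed power of $Y$ this is the familiar sieve estimate $\#\{b\le Y:P^-(b)>z\}\asymp Y\prod_{p\le z}(1-1/p)\asymp Y/\log z$, while for larger $z$ (still at most $Y/2$) one bounds below by the number of primes in $(Y/2,Y]$, which is $\gg Y/\log Y\asymp Y/\log z$. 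Since $a\le\sqrt z$ and $x\ge 2z^{3/2}$ give $x/a\ge 2z$, each inner count is $\gg(x/a)/\log z$. For the outer sum, I apply Lemma~\ref{Selberg} with $\delta/2$ in place of $\delta$, which is legitimate because $m\le(2-\delta)\log_2 z\le(2-\tfrac{\delta}{2})\log_2\sqrt z$ once $z$ is large; this gives $\sum_{a\le\sqrt z,\ \Omega(a)=m}1/a\asymp_\delta(\log_2\sqrt z)^m/m!\asymp_\delta(\log_2 z)^m/m!$, the last comparison using $m\le 2\log_2 z$. Multiplying, $\#\{n\le x:\Omega(n,z)=m\}\gg_\delta x(\log_2 z)^m/(m!\log z)$.

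Now suppose $z\le x\le 2z^{3/2}$, so that $\log x\asymp\log z$ and $\log_2 x=\log_2 z+O(1)$. Since $2z^{3/2}\le z^2$, an integer $n\le x$ cannot have two prime factors (counted with multiplicity) exceeding $z$, so its $z$-rough part $b$ equals $1$ or a single prime; hence $\Omega(n)=m+1$ forces $\Omega(n,z)\in\{m,m+1\}$, and therefore $\#\{n\le x:\Omega(n,z)\in\{m,m+1\}\}\ge\#\{n\le x:\Omega(n)=m+1\}$. This is exactly the step that requires both admissible values in the statement. By the classical Selberg--Sathe estimate for the distribution of $\Omega$ (see \cite{Ten}), the hypotheses $\delta\log_2 z\le m\le(2-\delta)\log_2 z$ together with $\log z\asymp\log x$ place $m+1$ in a range $[\delta'\log_2 x,(2-\delta')\log_2 x]$ with $\delta'=\delta'(\delta)>0$, whence $\#\{n\le x:\Omega(n)=m+1\}\asymp_\delta x(\log_2 x)^m/(m!\log x)\asymp_\delta x(\log_2 z)^m/(m!\log z)$, as needed.

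The one genuinely delicate ingredient is the uniform lower bound $\#\{b\le Y:P^-(b)>z\}\gg Y/\log z$ valid for all $Y\ge 2z$: it rests on a sieve bound for the relevant singular series, Mertens' theorem, and a Chebyshev-type lower bound for primes in a dyadic block, and on arranging the threshold $Y\ge 2z$ to mesh with the cut $x=2z^{3/2}$ between the two ranges --- this is what dictates the choice $a\le\sqrt z$ and the exponent $3/2$. Everything else --- the harmless loss of a constant in the range conditions for Lemma~\ref{Selberg} and for the Selberg--Sathe estimate (which is why the bounded $z$ are set aside), and the comparisons of iterated logarithms --- is routine.
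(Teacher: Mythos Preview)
The paper does not supply its own proof of this lemma; it simply attributes the result to Hal\'asz \cite{Ha} with an extension by Hall--Tenenbaum \cite[Theorem~08]{HT}. Your argument is correct and gives a self-contained derivation from other tools already collected in the paper (Lemma~\ref{Selberg}, Lemma~\ref{rough}, and the Selberg--Sathe count for $\Omega(n)$). The cited references obtain both the upper and lower bounds by analytic/characteristic-function methods applied directly to the distribution of $\Omega(n,z)$; your route is instead constructive, factoring $n$ into its $z$-smooth and $z$-rough parts and treating the two regimes $x\ge 2z^{3/2}$ and $x\le 2z^{3/2}$ separately. The payoff is a more modular argument that makes transparent why the pair $\{m,m+1\}$ is needed (only in the short range $x\le 2z^{3/2}$, where one must pass through the global statistic $\Omega(n)=m+1$), at the cost of invoking the Selberg--Sathe theorem as a black box. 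The checks you flag---that $m\le(2-\delta)\log_2 z$ falls inside $(2-\delta/2)\log_2\sqrt z$ for large $z$, that $(\log_2\sqrt z)^m\asymp(\log_2 z)^m$ in this range, and that $x\le 2z^{3/2}<z^2$ forces the rough part to have at most one prime factor---are all sound.
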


The next two lemmas, due to Timofeev \cite{T}, state that the prime factors of shifted primes 
have roughly the same distribution as prime factors of integers taken as a whole.

\begin{lem}\label{Timup}
 Fix $\delta>0$.  There is some constant $c_1(\delta)$ so that uniformly for $x\ge z\ge c_1(\delta)$ and
 $0\le m\le (2-\delta)\log_2 z$, we have
 \begin{align*}
  \# \{p\le x : \Omega(p-1,z)=m \} \ll_{\delta} \frac{x(\log_2 z)^m}{m!(\log x)(\log z)}, \\
   \# \{p\le x : \Omega^*(p-1,z)=m \} \ll_{\delta} \frac{x(\log_2 z)^m}{m!(\log x)(\log z)}.
 \end{align*}
\end{lem}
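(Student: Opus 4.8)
The plan is to reduce to counting primes $p\le x$ whose shifted value $p-1$ has a prescribed $z$-smooth part, and then to combine a uniform sieve bound for such counts with a Selberg-type estimate. We treat $\Omega(p-1,z)$; the bound for $\Omega^*(p-1,z)$ follows by the same argument, indicated at the end.

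Since $2\mid p-1$ for odd $p$ and $z\ge c_1(\delta)\ge 2$, the case $m=0$ is trivial ($p=2$ only). For $m\ge1$, every odd prime $p$ has a unique $z$-smooth part $a=a(p)$, characterized by $a\mid p-1$, $P^+(a)\le z$, $P^-((p-1)/a)>z$; then $2\mid a$ and $\Omega(a)=\Omega(p-1,z)$, so
\[
\#\{p\le x:\Omega(p-1,z)=m\}=\sum_{\substack{P^+(a)\le z,\ 2\mid a\\ \Omega(a)=m}}\ \#\bigl\{p\le x:\ a\mid p-1,\ P^-\bigl(\tfrac{p-1}{a}\bigr)>z\bigr\}+O(1).
\]
The two ingredients are as follows. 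First, a Selberg-type estimate: for every fixed nonnegative multiplicative $g$ with $g(q)\asymp 1/q$ and $g(q^j)\ll q^{-j}$,
\[
\sum_{\substack{P^+(a)\le z,\ 2\mid a\\ \Omega(a)=m}} g(a)\ \ll_\delta\ \frac{(\log_2 z)^m}{m!}\qquad(1\le m\le(2-\delta)\log_2 z),
\]
a standard consequence of Selberg's theorem on the distribution of $\Omega(n)$ (cf. Lemma~\ref{Selberg} and \cite[Theorem II.6.5]{Ten}); it comes from a saddle-point analysis of the Euler product $\prod_{q\le z}\bigl(1+\sum_{j\ge1}w^j g(q^j)\bigr)$, which is analytic and of size $\asymp(\log z)^w$ for $w$ in a compact subset of $(0,2)$, the endpoint $w=2$ (coming from the factor at $q=2$) being what limits the range to $m\le(2-\delta)\log_2 z$. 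Second, a prime count with $z$-rough cofactor: uniformly over even $a$ with $P^+(a)\le z$ and $a\le x^{1-c}$,
\[
\#\bigl\{p\le x:\ a\mid p-1,\ P^-\bigl(\tfrac{p-1}{a}\bigr)>z\bigr\}\ \ll\ \frac{x}{\phi(a)\,(\log x)(\log z)}\cdot h(a),
\]
where $h$ is nonnegative multiplicative with $h(q)\asymp 1$ and $h(q^j)\ll 1$. The complementary range $x^{1-c}<a\le x$ is empty unless $(\log z)(\log_2 z)\gg\log x$ (because $a\le z^{\Omega(a)}\le z^{(2-\delta)\log_2 z}$), so there $z$ is relatively large and the cofactor $(p-1)/a\le x^{c}$ is a product of $O(\log x/\log z)$ primes exceeding $z$; this is handled by a Chen-type sieve, or, when the $z$-smooth part of $p-1$ reaches $\sqrt{x}$, by invoking Timofeev's bound for $\#\{p\le x:\Omega(p-1)=k\}$.

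Granting these, substitution into the displayed sum and the first ingredient, applied with $g(a)=h(a)\,a/\phi(a)^2$ (which satisfies $g(q)\asymp 1/q$ and $g(q^j)\ll q^{-j}$), give
\[
\#\{p\le x:\Omega(p-1,z)=m\}\ \ll_\delta\ \frac{x}{(\log x)(\log z)}\sum_{\substack{P^+(a)\le z,\ 2\mid a\\ \Omega(a)=m}}\frac{h(a)\,a}{\phi(a)^2}\ \ll_\delta\ \frac{x\,(\log_2 z)^m}{m!\,(\log x)(\log z)}.
\]
For $\Omega^*$, write $a=2^b a'$ with $a'$ odd and $b=v_2(p-1)\ge1$, so that $\Omega^*(p-1,z)=\Omega(a')$; the same steps give a bound $\ll_\delta\frac{x}{(\log x)(\log z)}\sum_{b\ge1}2^{-b}\sum_{a'\ \mathrm{odd},\ \Omega(a')=m,\ P^+(a')\le z}g(a')$, in which the geometric series $\sum_{b\ge1}2^{-b}=1$ absorbs the ambiguity in the power of $2$ dividing $p-1$, and the odd-variable version of the first ingredient completes the proof.

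The crux is the second ingredient. To remove the odd prime factors $\le z$ from $(p-1)/a$ one applies an upper-bound (Brun or Selberg) sieve, noting that $2$ cannot be sifted out --- it already divides $p-1$ to the exact power recorded in $a$ --- which is exactly why the sieve saves the factor $\prod_{2<q\le z}(1-\frac1{q-1})\asymp 1/\log z$ and not a larger power of it, while Brun--Titchmarsh supplies $1/\log x$ from $\log(x/a)\gg\log x$. Making this estimate genuinely uniform in $z$ throughout $c_1(\delta)\le z\le x$ --- in particular in the intermediate range $z=\exp\{(\log x)^{o(1)}\}$, where the sieve is least efficient and the $z$-smooth part of $p-1$ can be large --- is the main difficulty; it is precisely the technical content of \cite{T}, which may also simply be cited for the lemma.
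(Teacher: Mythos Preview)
The paper's own proof is a single sentence: ``This is a special case of Theorem~2 of \cite{T}.'' Your proposal is not wrong, but it is not really an alternative proof either: you outline the natural strategy (decompose $p-1$ by its $z$-smooth part $a$, apply an upper-bound sieve to count primes with $a\mid p-1$ and $z$-rough cofactor, then sum over $a$ with $\Omega(a)=m$ via a Selberg/Hal\'asz-type bound), and then you explicitly concede that the only genuinely hard step --- making the sieve estimate uniform over the full range $c_1(\delta)\le z\le x$, in particular when the $z$-smooth part $a$ can exceed $x^{1-c}$ --- ``is precisely the technical content of \cite{T}, which may also simply be cited for the lemma.'' So in substance your argument and the paper's coincide: both defer the work to Timofeev.

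Two remarks on the sketch itself. First, your treatment of the range $a>x^{1-c}$ is the weakest link: saying it is ``handled by a Chen-type sieve'' is not a proof, and invoking ``Timofeev's bound for $\#\{p\le x:\Omega(p-1)=k\}$'' to cover it is close to circular, since that is the $z=x$ case of exactly the lemma you are proving. Second, your sieve inequality for fixed $a\le x^{1-c}$ is correct in shape, but note that the density one actually obtains is $\prod_{q\le z}(1-\rho(q)/q)\prod_{z<q}(1-1/q)$ with $\rho(q)=1$ or $2$ according as $q\mid a$ or not; working this out gives $\asymp \frac{1}{(\log x)(\log z)}\prod_{q\mid a,\,q>2}\frac{q-1}{q-2}$ rather than a bare $1/\phi(a)$, so the multiplicative weight you feed into the Selberg-type sum should be $g(a)=\frac{1}{a}\prod_{q\mid a,\,q>2}\frac{q-1}{q-2}$, which does satisfy your hypotheses $g(q)\asymp 1/q$, $g(q^j)\ll q^{-j}$. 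None of this changes the conclusion, but if you want the sketch to stand on its own it is worth getting the singular series right.
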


\begin{proof}
 This is a special case of Theorem 2 of \cite{T}.
\end{proof}

\begin{lem}\label{Timlow}
 Fix $\delta>0$.  There is some constant $c_2(\delta)$ so that uniformly for  $x \ge z\ge c_2(\delta)$ and
 $\delta \log_2 z\le m\le (2-\delta)\log_2 z$,
 \[
  \# \{p\le x : \Omega(p-1,z)\in \{m,m+1,m+2\} \} \gg_\delta \frac{x(\log_2 z)^m}{m!(\log x)(\log z)}.
 \]
\end{lem}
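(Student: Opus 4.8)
The plan is to deduce this from Lemma~\ref{Timup} by an inclusion argument, in exactly the way one typically passes from an upper bound of the right order to a lower bound of the right order when the ``total mass'' is known. Let $\pi(x,z,m) = \#\{p\le x : \Omega(p-1,z)=m\}$. Since every prime $p\le x$ has $\Omega(p-1,z)\ge 0$, and the number of $p\le x$ with $p-1$ divisible by a prime $\le z$ to total multiplicity $m$ summed over all $m$ just recounts all primes, a natural first step is a second-moment / Turán-type identity: $\sum_{p\le x} \Omega(p-1,z)$ and $\sum_{p\le x}\Omega(p-1,z)^2$ are both computable (the first by Mertens over primes $q\le z$ together with Bombieri--Vinogradov or just the Brun--Titchmarsh bound, giving $\sim \pi(x)\log_2 z$; the second similarly giving $\pi(x)(\log_2 z)^2(1+o(1))$). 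These show $\Omega(p-1,z)$ has mean $\log_2 z$ and variance $O(\log_2 z)$ over primes $p\le x$. First I would record these two estimates; they are standard and follow from Brun--Titchmarsh summed over prime moduli $q\le z$ and prime-pair moduli $q_1q_2\le z$.

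Next, the key step: combine the moment bounds with Lemma~\ref{Timup}. Fix $\delta>0$ and let $m$ lie in $[\delta\log_2 z,(2-\delta)\log_2 z]$. Set $M = \lfloor \log_2 z\rfloor$. By Chebyshev applied to the mean/variance estimates above, all but $O(\pi(x)/\log_2 z)$ of the primes $p\le x$ have $\Omega(p-1,z)$ within $K\sqrt{\log_2 z}$ of $M$ for a suitable absolute $K$; more usefully, for the lower bound I want to localize a positive proportion of the ``Poisson mass'' near any prescribed level $m$. The cleanest route is: the quantities $\pi(x,z,j)/\pi(x)$ for $j$ near $m$ should be compared to the Poisson weights $e^{-\log_2 z}(\log_2 z)^j/j!$. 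The upper bound Lemma~\ref{Timup} already gives $\pi(x,z,j)\ll \pi(x)(\log_2 z)^j/(j!\log z)$ — wait, that is far too small to be the truth for $j\approx\log_2 z$, so in fact Lemma~\ref{Timup} as stated has the extra $1/\log z$ because it is really bounding $p$ with ALL of $p-1$'s small prime factors accounted, i.e.\ it is the analog of the Hal\'asz bound, not the Selberg–Sathe count. So the correct reading is that Lemma~\ref{Timup} is genuinely the sharp upper bound for this restricted count, and the total $\sum_j \pi(x,z,j)$ is dominated by $j$ much larger than $\log_2 z$; it is $\pi(x,z,m)$ for $m\le (2-\delta)\log_2 z$ that behaves like $\pi(x)(\log_2 z)^m/(m!\log z)$ up to constants. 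Given that, I would establish the lower bound by a telescoping/convexity argument: using that the ratio $\pi(x,z,j+1)/\pi(x,z,j)$ cannot be too large (each step of adding a prime factor $q\le z$ costs a factor $\asymp \log_2 z / j$ on average, by a Buchstab-type recursion over the largest prime factor $q\le z$ of $p-1$), one shows that if $\pi(x,z,m)+\pi(x,z,m+1)+\pi(x,z,m+2)$ were much smaller than $\pi(x)(\log_2 z)^m/(m!\log z)$, then the neighbouring values $\pi(x,z,j)$ for $j$ ranging over a whole window around $\log_2 z$ would all be correspondingly small, contradicting the fact that $\sum_{|j-M|\le K\sqrt{\log_2 z}}\pi(x,z,j) \gg \pi(x)$ coming from the mean/variance bounds. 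Bundling three consecutive values $m,m+1,m+2$ (rather than a single one) is precisely what gives room for the parity/ratio fluctuations, just as Lemma~\ref{Hallow} bundles two.

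The cleanest and most likely intended route, however, is simply to \emph{cite} the structure of Timofeev's paper: Lemma~\ref{Timlow} is the lower-bound companion to Lemma~\ref{Timup} and both appear in \cite{T} (Theorems~1 and~2 there), so I would write ``This is contained in Theorem~1 of \cite{T},'' paralleling the one-line proof given for Lemma~\ref{Timup}. If a self-contained argument is wanted, the Buchstab recursion in the largest prime factor $q\in(z^{1/2},z]$ of $p-1$ reduces the count at level $m$ to counts $\pi(x/q', z^{1/2}, m-1)$ with $q'\equiv 1\pmod{q'}$-type conditions handled by Brun--Titchmarsh, and one iterates; the main obstacle in making that rigorous is uniformity in $m$ near the endpoints of $[\delta\log_2 z,(2-\delta)\log_2 z]$ and controlling the accumulated error terms from Brun--Titchmarsh over many prime moduli, which is exactly the technical heart of \cite{T} and not something I would reproduce. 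So the realistic plan is: invoke \cite{T} for the sharp statement, and indicate the moment-plus-convexity heuristic above as the reason it holds.
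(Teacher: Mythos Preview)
Your eventual conclusion---cite Timofeev---is indeed what the paper does, but you have missed the one genuine issue that the paper's proof addresses. The relevant result is Theorem~3 of \cite{T} (not Theorem~1), and as stated there it carries the hypothesis that $z\to\infty$ as $x\to\infty$. The lemma, however, demands uniformity for all $z\ge c_2(\delta)$ with $x\ge z$, so a bare citation does not suffice. The paper's proof consists precisely of explaining why that extra hypothesis in \cite{T} is spurious: at the point in Timofeev's argument where it is invoked (just before his equation (18)), one only needs the trivial monotonicity $E(x/t)\le E(x)$ for the set $E$ of primes $\le z$, so no growth condition on $z$ is required and the uniform statement follows.

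Your moment-plus-convexity sketch is a reasonable heuristic for why such a lower bound should hold, but as you yourself note it is not a proof, and the paper does not attempt anything of the sort. If you want your write-up to match the paper's, drop the long heuristic discussion, cite Theorem~3 of \cite{T}, and add the observation about removing the $z\to\infty$ hypothesis.
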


\begin{proof}
 This is essentially a special case of part of Theorem 3 of \cite{T}, except that in the cited theorem it
 is stated that we must have $z\to\infty$ as $x\to \infty$.  This condition does not make sense in light of
 the uniformity claimed in the theorem, and in fact this stronger hypothesis on $z$
 (which comes into play when dealing with a set $E$ of primes, which in our application is taken to be the set of primes in $[2,z]$) 
 is never used in the proof.  Indeed, in the place where it is claimed to be needed, prior to \cite[(18)]{T}, 
 no hypothesis is needed at all on the set $E$, since $E(x/t) \le E(x)$ for any set $E$ and (18) follows 
 immediately.
\end{proof}

\begin{lem}\label{recip}
 Uniformly for $e^{2} \le z\le x$, $k\le 1.8\log_2 z$, $0\le\xi\le \frac{1}{5\log x}$, $0\le c\le 10$, we have
 \[
  \ssum{P^+(n)\le x \\ \Omega(n,z)=k} \frac{1}{n^{1-\xi}} \pfrac{n}{\phi(n)}^c \ll \frac{\log x}{\log z} \, \frac{(\log_2 z)^k}{k!}.
 \]
\end{lem}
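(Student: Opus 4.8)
The plan is to split each $n$ according to its $z$-smooth part. Write $n=n_1n_2$ with $n_1=\prod_{p\le z}p^{v_p(n)}$ the largest $z$-smooth divisor of $n$ and $n_2=n/n_1=\prod_{z<p\le x}p^{v_p(n)}$ (legitimate since $P^+(n)\le x$). Then $(n_1,n_2)=1$, both $n\mapsto n^{1-\xi}$ and $n\mapsto(n/\phi(n))^c$ are multiplicative, and $\Omega(n,z)=\Omega(n_1)$; hence the sum factors as $S_1S_2$, where
\[
S_1=\ssum{P^+(n_1)\le z\\ \Omega(n_1)=k}\frac{1}{n_1^{1-\xi}}\pfrac{n_1}{\phi(n_1)}^c,\qquad
S_2=\ssum{P^+(n_2)\le x\\ P^-(n_2)>z}\frac{1}{n_2^{1-\xi}}\pfrac{n_2}{\phi(n_2)}^c .
\]
It then suffices to prove $S_2\ll\log x/\log z$ and $S_1\ll(\log_2 z)^k/k!$.

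For $S_2$ I would expand into the Euler product $\prod_{z<p\le x}\bigl(1+(\tfrac{p}{p-1})^c\tfrac1{p^{1-\xi}-1}\bigr)$, take logarithms, and use $\log(1+t)\le t$, $(\tfrac{p}{p-1})^c=1+O(1/p)$ and $\tfrac1{p^{1-\xi}-1}=p^{-(1-\xi)}+O(p^{-(2-2\xi)})$ to reduce to $\sum_{z<p\le x}p^{-(1-\xi)}+O(1)$. Since $p\le x$ forces $p^{\xi}=1+O(\xi\log p)$ with $\xi\log p\le\xi\log x\le\tfrac15$, Mertens' theorems give $\sum_{z<p\le x}p^{-(1-\xi)}=\log(\log x/\log z)+O(\xi\log x)+O(1)=\log(\log x/\log z)+O(1)$, whence $S_2\ll\log x/\log z$. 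The two extra weights cost only bounded factors here: $\xi\log x\le\tfrac15$ absorbs $p^{\xi}$, and $c\le10$ keeps the Euler correction convergent.

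The real work is $S_1$. The natural device is the generating function $F(w)=\prod_{p\le z}\bigl(1+(\tfrac{p}{p-1})^c\tfrac{w}{p^{1-\xi}-w}\bigr)$, analytic for $|w|<2^{1-\xi}\ge 2^{9/10}$, with $S_1=[w^k]F(w)$. A crude Rankin bound $S_1\le\rho^{-k}F(\rho)$, with $\log F(\rho)=\rho\log_2 z+O(1)$ (the $O(1)$ absorbing the finitely many small-prime factors, where $(\tfrac{p}{p-1})^c$ is large, together with the convergent tail) and $\rho=k/\log_2 z\le1.8$, yields only $S_1\ll\sqrt{k}\,(\log_2 z)^k/k!$ — too weak by a factor $\sqrt{k}\ll\sqrt{\log_2 z}$. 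To remove this loss I would extract $[w^k]F$ by Cauchy's formula on $|w|=\rho$: after pulling out the bounded factors at $p\in\{2,3,5,7\}$ (where $F$ can even vanish inside the disk), the remaining product is zero-free and, in modulus, is $\ll\exp(\rho\cos\theta\,\log_2 z)$ on the circle $w=\rho e^{i\theta}$, so the integral is $\ll\rho^{-k}\int_0^{2\pi}e^{\rho\cos\theta\,\log_2 z}\,d\theta\asymp\rho^{-k}I_0(\rho\log_2 z)$; with $\rho=k/\log_2 z$ the Bessel asymptotic $I_0(t)\asymp e^t/\sqrt{t}$ (for $t\asymp k$) cancels the Stirling $\sqrt{k}$ and leaves $S_1\ll(\log_2 z)^k/k!$. (Equivalently, a combinatorial route works: the squarefree $n_1$ dominate and for them the number of ordered prime factorizations is exactly $k!$, giving $\le\frac1{k!}\bigl(\sum_{p\le z}(\tfrac{p}{p-1})^c p^{-(1-\xi)}\bigr)^k\ll(\log_2 z)^k/k!$, while the non-squarefree part is absorbed by a convergent recursion once the small primes are isolated.)

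Thus the main obstacle is pinning down the exact $1/k!$ in $S_1$ uniformly. Three features make this delicate: (i) $c$ may be as large as $10$, so the factors at small primes are large and must be peeled off separately — but only finitely many of them, at cost $O(1)$; (ii) the saddle $\rho\approx k/\log_2 z$ is allowed up to $1.8$, just short of the radius of convergence $2^{1-\xi}$, so the contour barely fits; and (iii) the hypothesis $k\le1.8\log_2 z$ is precisely what converts additive $O(1)$ errors in the exponent or base into multiplicative $O(1)$ factors, via $(\log_2 z+O(1))^k=(\log_2 z)^k e^{O(k/\log_2 z)}=(\log_2 z)^k e^{O(1)}$, with $\xi\le\tfrac1{5\log x}$ playing the analogous role for the $p^{\xi}$ weight.
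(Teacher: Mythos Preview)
Your proposal is correct and takes essentially the same approach as the paper: both bound the generating series $\sum v^{\Omega(n,z)} n^{-(1-\xi)}(n/\phi(n))^c$ by $\ll (\log z)^{\Re v}\cdot(\log x)/(\log z)$ via the Euler product (using $(p/(p-1))^c=1+O(1/p)$ and $p^\xi=1+O(\xi\log p)$), then extract the coefficient of $v^k$ by Cauchy's formula on the circle $|v|=k/\log_2 z$ and estimate $\int_0^{2\pi} e^{k\cos\theta}\,d\theta$ to recover the $1/k!$. The only cosmetic difference is that you first split off the $z$-rough part as a separate sum $S_2$, whereas the paper keeps both prime ranges together and lets the Euler product factor naturally; your worry about zeros of $F$ inside the disk is unnecessary, since one bounds $|F|$ on the circle directly rather than via $\log F$.
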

\begin{proof}
 We follow the proof of Theorem 08 of \cite{HT} with small modifications.  Note that $\xi \le 0.1$.
Thus, since $2^{0.9} > 1.86$,  for any complex $v$ with $|v|\le 1.8$ we have
 \[
  \sum_{P^+(n)\le x} \frac{v^{\Omega(n,z)}}{n^{1-\xi}} \pfrac{n}{\phi(n)}^c = \prod_{p\le z} \(1+\frac{v}{p^{1-\xi}}
  \pfrac{p}{p-1}^c+O\pfrac{1}{p^{1.8}}\)\prod_{z<p\le x} \(1+\frac{1}{p^{1-\xi}}  \pfrac{p}{p-1}^c+O\pfrac{1}{p^{1.8}}\).
 \]
Now $\pfrac{p}{p-1}^c=1+O(1/p)$ and $p^\xi=1+O(\xi\log p)$ since $\xi\le \frac{1}{5\log x}\le \frac{1}{5\log p}$.  So 
\begin{align*}
 \sum_{P^+(n)\le x} \frac{v^{\Omega(n,z)}}{n^{1-\xi}} \pfrac{n}{\phi(n)}^c &= \prod_{p\le z}
 \(1+\frac{v}{p}+O\pfrac{\xi\log p}{p}\) \prod_{z<p\le x} \(1+\frac{1}{p}+O\pfrac{\xi\log p}{p}\) \\
 &\ll (\log z)^{\Re v}\, \frac{\log x}{\log z}.
\end{align*}
Let $r=k/\log_2 z$ and $v=re^{i\theta}$ where $0\le \theta\le 2\pi$.  Then, as in \cite{HT},
\begin{align*}
  \ssum{P^+(n)\le x \\ \Omega(n)=k} \frac{1}{n^{1-\xi}} \pfrac{n}{\phi(n)}^c &=
 \frac{1}{2\pi r^k} \int_0^{2\pi} e^{-ik\theta} \sum_{P^+(n)\le x} \frac{(re^{i\theta})^{\Omega(n,z)}}{n^{1-\xi}} 
 \pfrac{n}{\phi(n)}^c\, d\theta \\
 &\ll \frac{\log x}{\log z} \, \frac{(\log_2 z)^k}{k^k} \int_0^{2\pi} e^{k\cos\theta}\, d\theta \\
 &\ll \frac{\log x}{\log z} \, \frac{(\log_2 z)^k}{k!}. \qedhere
\end{align*}
\end{proof}

Our next tool is a hybrid of the classical theorem of Hardy-Ramanujan and the Brun-Titchmarsh inequality.

\begin{lem}[\cite{CCT}, Theorem 1.1]\label{CCT}
 Uniformly for $x>1$, $k\ge 0$, $q\in \NN$ and $(a,q)=1$ with $1\le q<x$ we have
 \[
  \ssum{n\le x \\ n\equiv a\pmod{q} \\ \omega(n)\le k} 1 \ll \frac{x}{\phi(q) \log(10x/q)} \sum_{j=0}^{k-1} \frac{(\log_2(10x/q))^{j}}{j!}.
 \]
\end{lem}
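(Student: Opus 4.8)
The plan is to prove this as a uniform Hardy--Ramanujan bound for integers in arithmetic progressions, by induction on $k$, with the Brun--Titchmarsh inequality in the role that Chebyshev's prime bound plays in the classical argument. Throughout write $L=\log_2(10x/q)$; the key point is that $\log(10x/q)\asymp\max(1,\log(x/q))$ is positive and bounded below for every admissible $q$, so it is a legitimate denominator, and the quantity $x/q$ is the ``effective length'' of the progression, playing the role that $x$ plays in the unrestricted bound. One keeps track of the cumulative count $\Pi_k(y;q,b):=\#\{n\le y:n\equiv b\pmod q,\ \omega(n)\le k\}$.

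The base case $k=1$ is exactly Brun--Titchmarsh in the form $\#\{p\le x:p\equiv a\pmod q\}\ll x/(\phi(q)\log(10x/q))$, valid for all $q<x$ (for $q>x/2$ the left side is $\le2$ while the right side is $\gg1$). For the inductive step I would write each $n\le x$ with $1\le\omega(n)\le k$ as $n=p^{b}m$ with $p=P^{+}(n)$, $b=v_p(n)\ge1$, $p\nmid m$, $P^{+}(m)<p$, $\omega(m)\le k-1$ and $m\equiv ap^{-b}\pmod q$; summing the inductive hypothesis over $p$ and $b$ reduces matters to the weighted prime sum $\sum_{p}\tfrac1{p\log(10x/(pq))}$, and the decisive computation, via partial summation and Mertens' theorem, is that $\sum_{p<x/q}\tfrac1{p\log(10x/(pq))}\asymp\tfrac{L}{\log(10x/q)}$ (the weight concentrates on primes of intermediate size). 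This extra factor $L$ is precisely what advances the exponent of $L$ by one in the Hardy--Ramanujan sum. To make the induction self-sustaining one carries an additive constant $C$ inside $(L+C)^{j}$ in the hypothesis; this is harmless for the final statement because $\sum_{j<k}\frac{(L+C)^{j}}{j!}\le e^{C}\sum_{j<k}\frac{L^{j}}{j!}$, so $C$ is absorbed into the implied constant once the sum over $j$ is restored.

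The hard part, and the reason the precise order was called ``delicate'' in \cite{MPP}, is carrying this out uniformly when $q$ is not small compared with $x$, where both the singular-series factor $x/\phi(q)$ (rather than $x/q$) and the exact power of $L$ must come out right. Two linked difficulties arise: once the running cofactor $y=x/p^{b}$ drops below $q$ the inductive hypothesis degenerates, since a residue class mod $q$ meets $[1,q)$ in at most one point, so the recursion must be re-routed rather than iterated blindly there; and when $q>\sqrt x$ Brun--Titchmarsh is no longer available for the innermost prime, so one must first split $n$ according to whether it has a prime factor exceeding $\sqrt x$ --- the $\sqrt x$-smooth part handled by continuing the recursion with the largest prime factor, now below $\sqrt x$, and the ``rough'' part by a single Brun--Titchmarsh application to the large prime $p>\sqrt x$ together with the companion estimate $\sum_{m\le y,\ \omega(m)\le k}\tfrac1m\ll\sum_{j\le k}\tfrac{(\log_2 y)^{j}}{j!}$ (a weighted variant of Lemma~\ref{Selberg}) for the cofactor $m=n/p$. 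Keeping all of these pieces simultaneously uniform in $x,q,a,k$ --- in particular not incurring the spurious factor $(\log_2 x)^{1/2}$ that a crude Rankin--H\"older argument (bounding $\mathbf 1[\omega(n)\le k]\le z^{\omega(n)-k}$ with $z\le1$ and estimating $\sum_{n\equiv a\,(q)}z^{\omega(n)}$ by an Euler product) would produce --- is where essentially all of the work lies.
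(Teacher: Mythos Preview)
The paper gives no proof of this lemma: it is simply quoted as Theorem~1.1 of \cite{CCT}, with no argument supplied. So there is nothing in the paper to compare your proposal against; what you have written is a sketch of how one would reprove the cited result. The strategy you outline --- induction on $k$ in the Hardy--Ramanujan manner, with Brun--Titchmarsh standing in for Chebyshev at the base, and the estimate
\[
\sum_{p<x/q}\frac{1}{p\,\log(10x/(pq))}\asymp\frac{\log_2(10x/q)}{\log(10x/q)}
\]
supplying the extra factor of $L$ at each step --- is indeed how \cite{CCT} proceeds, so your plan is sound.

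Two small corrections. First, your reference to the ``delicate'' remark in \cite{MPP} is misplaced: that comment in the introduction concerns the singular-series constant arising in the sieve bound for $N(x,y)$ in range~(iii) of Theorem~A, not this lemma. Second, you are over-complicating the large-$q$ regime. Brun--Titchmarsh in the form $\pi(x;q,a)\ll x/(\phi(q)\log(2x/q))$ holds for \emph{all} $1\le q<x$, so no separate treatment of $q>\sqrt{x}$ is needed for the base case. The genuine care, as you correctly identify, is when the running length $x/p^{b}$ falls below $q$ during the recursion; there the residue class contains at most one integer, and one simply checks that the resulting $O(1)$ contributions, summed over the relevant $p$, are dominated by the main term. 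The higher-prime-power contributions ($b\ge 2$) likewise need a brief separate estimate, but are lower order. With those points handled the induction closes exactly as in the classical unrestricted case.
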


Finally, we need crude estimates for partial sums of the Poisson distribution.

\begin{lem}\label{Poisson}
 Let $v>0$ and $v^{-1/2} \le \lambda \le \frac12$.  Then
 \[
\frac{e^{-vQ(1-\lam)}}{\lam \sqrt{v}} \ll \sum_{(1-\lam)v-1/\lam \le k\le (1-\lam)v} e^{-v} \frac{v^k}{k!}  \le \sum_{k\le (1-\lam)v} e^{-v} \frac{v^k}{k!} \ll \frac{e^{-vQ(1-\lam)}}{\lam \sqrt{v}} 
 \]
and
\[
 \frac{e^{-vQ(1+\lam)}}{\lam \sqrt{v}} \ll \sum_{(1+\lam)v \le k\le (1+\lam)v+1/\lam} e^{-v} \frac{v^k}{k!}  \le \sum_{k\ge (1+\lam)v} e^{-v} \frac{v^k}{k!} \ll \frac{e^{-vQ(1+\lam)}}{\lam \sqrt{v}}, 
\]
where $Q(y)=y\log y-y+1$.
\end{lem}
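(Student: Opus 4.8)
The plan is to obtain both the upper and lower bounds for the tail sums of the Poisson distribution by comparing consecutive terms and summing a (nearly) geometric series, then identifying the exponent with $Q$. Write $p_k = e^{-v} v^k/k!$ and note $p_{k}/p_{k-1} = v/k$. For the left-hand inequalities, consider $k \le (1-\lam)v$: then $v/k \ge 1/(1-\lam) > 1$, so the terms are increasing as $k$ increases toward $(1-\lam)v$, and the whole sum $\sum_{k\le(1-\lam)v} p_k$ is dominated by a geometric-type tail below its largest term $p_m$ with $m = \fl{(1-\lam)v}$. More precisely, for $k \le m$ one has $p_k \le p_m \prod_{j=k+1}^{m} (j/v) \le p_m (m/v)^{m-k} \le p_m (1-\lam)^{m-k}$ up to a harmless factor (since $m \le (1-\lam)v$), so $\sum_{k\le m} p_k \ll p_m \cdot \frac{1}{1-(1-\lam)} = p_m/\lam$. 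By Stirling, $p_m = e^{-v} v^m/m! \asymp \frac{1}{\sqrt{m}} e^{-v + m\log v - m\log m + m} = \frac{1}{\sqrt{v}}\, e^{-v Q(m/v)} \cdot (\text{bounded factor})$, and since $m/v = (1-\lam) + O(1/v)$ and $Q'$ is bounded near $1$ on the relevant range (using $\lam \ge v^{-1/2}$ to control the error $O(1/v)$ against the scale of $Q(1-\lam) \asymp \lam^2$), we get $p_m \asymp \frac{1}{\sqrt v} e^{-vQ(1-\lam)}$. This yields $\sum_{k\le(1-\lam)v} p_k \ll \frac{e^{-vQ(1-\lam)}}{\lam\sqrt v}$, and for the matching lower bound one restricts to the $\asymp 1/\lam$ terms in $[(1-\lam)v - 1/\lam,\ (1-\lam)v]$, each of which is $\gg p_m \cdot c$ for an absolute constant (the ratio $p_k/p_m$ over this short range is bounded below), giving $\gg \frac{1}{\lam} \cdot \frac{1}{\sqrt v} e^{-vQ(1-\lam)}$.

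The right-hand (upper tail) inequalities are entirely symmetric: for $k \ge (1+\lam)v$ one has $v/k \le 1/(1+\lam) < 1$, so terms decrease geometrically away from the smallest index $M = \cl{(1+\lam)v}$, and $\sum_{k\ge M} p_k \ll p_M/\lam \asymp \frac{1}{\lam\sqrt v} e^{-vQ(1+\lam)}$, with the lower bound coming from the $\asymp 1/\lam$ terms in $[(1+\lam)v,\ (1+\lam)v + 1/\lam]$. The middle inequalities in each chain (bounding the short-range partial sum by the full tail) are trivial since all terms are nonnegative.

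The main technical point to handle carefully is the passage from $p_m$ with $m$ an integer near $(1-\lam)v$ to the clean expression $\frac{1}{\sqrt v}e^{-vQ(1-\lam)}$: one must verify that replacing $m/v$ by $1-\lam$ in the exponent $vQ(\cdot)$ costs only a bounded multiplicative factor. Since $|m/v - (1-\lam)| \le 1/v$ and $Q$ is smooth with $Q'(1)=0$, $Q''(1)=1$, the change in $vQ$ is $O(v \cdot \lam \cdot \frac1v) = O(\lam) = O(1)$ on the range $\lam \le \tfrac12$ — here the hypothesis $\lam \ge v^{-1/2}$ is not actually needed for this particular estimate but is what makes the factor $\frac{1}{\lam\sqrt v}$ the genuine order of the sum (when $\lam$ is much smaller, the Gaussian central regime takes over and the geometric-tail heuristic fails). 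I would also note that the case $v$ bounded (say $v \le 1$) forces $\lam \ge v^{-1/2} \ge 1 > \tfrac12$, so the hypotheses are vacuous there and we may assume $v$ is large, which simplifies all the Stirling estimates.
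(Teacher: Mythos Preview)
Your argument is correct. The paper does not actually prove this lemma: its entire proof is the sentence ``These may be found, e.g.\ in Norton \cite[\S 4]{Nor}.'' So you have supplied a direct proof where the paper simply cites the literature.

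Your route---Stirling for the boundary term $p_m$, geometric comparison $p_k/p_{k-1}=v/k$ to sum the tail as $\ll p_m/\lambda$, and the observation that the $\asymp 1/\lambda$ terms nearest the boundary each have size $\asymp p_m$---is exactly the standard elementary derivation that underlies Norton's estimates, so in spirit you are reproducing what the citation points to. Two small points worth tightening if you write this out in full: (i) in the lower-bound step you should make explicit that $(1-\lambda)^{1/\lambda}$ is bounded below on $0<\lambda\le\tfrac12$ and that the perturbation from the extra $O(1/(\lambda v))$ in the ratio costs only a bounded factor, using $\lambda\ge v^{-1/2}$; and (ii) your remark that the hypotheses are vacuous for small $v$ is right (indeed $v^{-1/2}\le\tfrac12$ forces $v\ge 4$), and it is worth stating this up front so that Stirling and the various $\asymp$ manipulations are unproblematic. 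With those details filled in, your proof is self-contained, whereas the paper's is a bare reference.
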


\begin{proof}
 These may be found, e.g. in Norton \cite[\S 4]{Nor}.
\end{proof}

Useful corollaries of these bounds include bounds on the ``tails'' of the distribution of $\Omega(n,z)$
and $\Omega^*(n,z)$.

\begin{lem}\label{tails}
 Fix $\delta>0$ and suppose $z$ is sufficiently large in terms of $\delta$.  
 
 (i) Uniformly for $x\ge z$ and $1 \le \lam \le 1-\delta$, we have
 \[
  \# \{m\le x: \Omega(m,z) \ge (1+\lam) \log_2 z \} \ll_\delta \frac{x}{(\log z)^{Q(1+\lam)}\max(1,\lam \sqrt{\log_2 z})}.
 \] 
(ii)  Uniformly for $x\ge z$ and $1 \le w \le 2-\delta$, we have
 \[
  \# \{m\le x: \Omega^*(m,z) \ge  (1+\lam) \log_2 z \} \ll_\delta \frac{x}{(\log z)^{Q(1+\lam)}\max(1,\lam \sqrt{\log_2 z})}.
 \]
\end{lem}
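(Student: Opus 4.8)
The plan is to split the count at the largest value of $\Omega(m,z)$ (resp.\ $\Omega^*(m,z)$) for which the Hall--Tenenbaum-type bounds of Lemma~\ref{Halup} remain available --- namely at $(2-\tfrac\delta2)\log_2 z$ in part (i) and at $(3-\tfrac\delta2)\log_2 z$ in part (ii) --- handling the piece below this threshold with Lemma~\ref{Halup} and Lemma~\ref{Poisson}, and the rarer piece above it by a crude Rankin bound. I describe part (i); part (ii) is identical after replacing $\Omega$ by $\Omega^*$, the exponent $2$ by $3$, and using the $\Omega^*$-versions of Lemma~\ref{Halup}, which are valid up to $(3-\delta)\log_2 z$.

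Put $v=\log_2 z$. If $\lambda\sqrt v<1$ then $Q(1+\lambda)\le\lambda^2<1/v$, so $(\log z)^{Q(1+\lambda)}<e$, the asserted bound exceeds $x/e$, and the estimate is trivial; so assume $\lambda\ge v^{-1/2}$. Write the count as $S_1+S_2$, where $S_1$ counts $m\le x$ with $(1+\lambda)v\le\Omega(m,z)\le(2-\tfrac\delta2)v$ and $S_2$ counts $m\le x$ with $\Omega(m,z)>(2-\tfrac\delta2)v$. For $S_1$ I would apply the second bound of Lemma~\ref{Halup} (with $\delta/2$ in place of $\delta$) at each value $\Omega(m,z)=j$, extend the sum to all $j\ge(1+\lambda)v$, and use $e^{-v}=1/\log z$ to get $S_1\ll_\delta x\sum_{j\ge(1+\lambda)v}e^{-v}v^j/j!$. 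For $\lambda\le\tfrac12$ this series is $\ll e^{-vQ(1+\lambda)}/(\lambda\sqrt v)$ by Lemma~\ref{Poisson}; for $\tfrac12<\lambda\le1-\delta$ the ratio of consecutive terms is $\le(1+\lambda)^{-1}<1$, so the series is dominated by its first term, which by Stirling is $\asymp e^{-vQ(1+\lambda)}/\sqrt v$. In either case $S_1\ll_\delta x(\log z)^{-Q(1+\lambda)}/(\lambda\sqrt v)\le x(\log z)^{-Q(1+\lambda)}/\max(1,\lambda\sqrt v)$.

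For $S_2$ I would use Rankin's inequality: for any $1<r<2$, $S_2\le r^{-(2-\delta/2)v}\sum_{m\le x}r^{\Omega(m,z)}$. Factoring $m=ab$ with $a$ the $z$-smooth part of $m$ (so that $\Omega(m,z)=\Omega(a)$) and $b$ the $z$-rough part, and using $\#\{b\le T:P^-(b)>z\}\ll1+T/\log z$, one gets $\sum_{m\le x}r^{\Omega(m,z)}\ll\sum_{a\le x,\,P^+(a)\le z}r^{\Omega(a)}+\frac{x}{\log z}\sum_{a\le x,\,P^+(a)\le z}\frac{r^{\Omega(a)}}{a}$. The second sum is at most $\prod_{p\le z}(1-r/p)^{-1}\ll_\delta(\log z)^r$ (the Euler factor at $2$ being $O_\delta(1)$ since $r<2$), and the first is $\ll_\delta x^{1-\eta}\ll_\delta x$ by Rankin with any exponent $1-\eta$ satisfying $\eta<1-\log r/\log2$, since then $\sum_{P^+(a)\le z}r^{\Omega(a)}a^{-(1-\eta)}\le\prod_p(1-r\,p^{-(1-\eta)})^{-1}<\infty$ uniformly in $z$; in particular no power of $\log x$ intervenes. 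Thus $\sum_{m\le x}r^{\Omega(m,z)}\ll_\delta x(\log z)^{r-1}$, and with $r=2-\tfrac\delta2$ this gives $S_2\ll_\delta x(\log z)^{(r-1)-(2-\delta/2)\log r}=x(\log z)^{-Q(2-\delta/2)}$. Since $Q'(y)=\log y>0$ for $y>1$ and $1+\lambda\le2-\delta$, the quantity $c(\delta):=Q(2-\tfrac\delta2)-Q(2-\delta)$ is a fixed positive constant with $Q(2-\tfrac\delta2)\ge Q(1+\lambda)+c(\delta)$, so $S_2\ll_\delta x(\log z)^{-Q(1+\lambda)}(\log z)^{-c(\delta)}$, and $(\log z)^{-c(\delta)}\sqrt v\to0$ absorbs $\max(1,\lambda\sqrt v)\le\sqrt v$ provided $z$ is large enough in terms of $\delta$. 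Adding the two bounds gives part (i).

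The step I expect to be the real obstacle is the bound for $S_2$. A union bound over a divisor $a\mid m$ with $P^+(a)\le z$ and $\Omega(a)=\lceil(1+\lambda)v\rceil$ only produces $S_2\le x\sum_{P^+(a)\le z,\,\Omega(a)=\lceil(1+\lambda)v\rceil}1/a$, which overcounts by a factor $\log z$ and is too weak; one is forced to use Rankin with $r$ close to $2$, losing a factor $\sqrt v$, and this loss is affordable precisely because applying the $\delta/2$-version of Lemma~\ref{Halup} on the main range left the fixed gap $c(\delta)>0$. The second delicate point is that the Rankin sum $\sum_{m\le x}r^{\Omega(m,z)}$ must carry no factor $\log x$: this is exactly where the factorization through the $z$-smooth part, together with the bound $\sum_{a\le x,\,P^+(a)\le z}r^{\Omega(a)}\ll_\delta x$ (rather than the $x(\log x)^{r-1}$ one would get by ignoring smoothness), is essential for the claimed uniformity in $x$.
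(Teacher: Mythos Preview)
Your argument is correct and follows the same route as the paper: split at $(2-\tfrac\delta2)\log_2 z$ (respectively $(3-\tfrac\delta2)\log_2 z$ for part~(ii)), control the main range via Lemma~\ref{Halup} and Lemma~\ref{Poisson}, and dispose of the far tail by a Rankin bound---the paper simply invokes \cite[Theorem~01]{HT} to obtain $\sum_{m\le x}(2-\tfrac\delta2)^{\Omega(m,z)}\ll_\delta x(\log z)^{1-\delta/2}$, whereas you spell out the smooth/rough factorization that underlies it. One slip to fix: you cite the ``second bound'' of Lemma~\ref{Halup}, but the formula you then write down (and need) comes from the \emph{first} displayed bound there, the counting estimate $\#\{n\le x:\Omega(n,z)=j\}\ll_\delta x(\log_2 z)^j/(j!\log z)$.
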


\begin{proof}
 Without loss of generality, suppose $\delta<1/10$.  For (i), we have
 \begin{align*}
   \# \{m\le x: \Omega(m,z) \ge (1+\lam) \log_2 z \} &\le \sum_{(1+\lam) \log_2 z\le j\le (2-\delta/2)\log_2 z}  \# \{m\le x: \Omega(m,z) =j \}  \\
+ \quad &   \sum_{m\le x} (2-\delta/2)^{\Omega(m,z)-(2-\delta/2)\log_2 z},
 \end{align*}
the second sum being an upper bound for the number of $m\le x$ with $\Omega(m,z)>(2-\delta/2)\log_2 z$.
The terms in the first sum are estimated with the first part of Lemma \ref{Halup} together with Lemma \ref{Poisson}.
The second sum is estimated 
using standard estimates for sums of multiplicative functions, e.g. \cite[Theorem 01]{HT}, and one obtains
\[
 \sum_{m\le x} (2-\delta/2)^{\Omega(m,z)-(2-\delta/2)\log_2 z}\ll x (\log z)^{-Q(2-\delta/2)},
\]
which is smaller than the other term, since $Q(u)$ is an increasing function for $u>1$.

Part (ii) is proved similarly, using the second part of Lemma \ref{Halup}, and by breaking up the sum at $j=(3-\delta/2)\log_2 z$.
\end{proof}

%
\section{Tools from sieve methods}\label{sec:sieve}
%

\begin{lem}\label{smooth}
 We have $\# \{ n\le x : P^+(n)\le y \} \ll x e^{- 0.5 (\log x)/\log y}$
 uniformly for $x\ge y\ge 2$.
\end{lem}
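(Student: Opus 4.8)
The plan is to establish, for a suitable absolute constant $C_0$, the bound
\[
\Psi(x,y):=\#\{n\le x:P^+(n)\le y\}\ \le\ C_0\,x\,e^{-(\log x)/(2\log y)}\qquad(x\ge 1,\ y\ge 2),
\]
which clearly contains the lemma. The case $1\le x<y$ is immediate, since then $\Psi(x,y)=\lfloor x\rfloor\le x$ while $e^{-(\log x)/(2\log y)}>e^{-1/2}$, so it suffices to treat $x\ge y$. I would fix a large absolute constant $Y_0$ and induct on the rank $r(y)$, defined to be $0$ for $2\le y\le Y_0$ and $\lceil\log_2(\log y/\log Y_0)\rceil$ for $y>Y_0$, so that $r(\sqrt y)=r(y)-1$ whenever $y>Y_0$. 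For the base case $y\le Y_0$ the trivial estimate $\Psi(x,y)\le\prod_{p\le y}\bigl(1+\lfloor\log_p x\rfloor\bigr)\le(1+\log_2 x)^{\pi(Y_0)}$ is $O_{Y_0}\bigl(x^{1-1/(2\log 2)}\bigr)\le O_{Y_0}\bigl(x\,e^{-(\log x)/(2\log y)}\bigr)$, which gives the claim once $C_0$ is large enough in terms of $Y_0$.

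For the inductive step assume $y>Y_0$ and set $u=(\log x)/(\log y)$. If $u\le 4e$ I would simply use $\Psi(x,y)\le x=e^{u/2}\cdot xe^{-u/2}\le e^{2e}\cdot xe^{-u/2}$. If $u>4e$, the key step is the exact identity obtained by factoring each $y$-smooth $n\le x$ as $n=n_1n_2$, where $n_1$ is the part of $n$ built from its prime factors lying in $(\sqrt y,y]$ and $n_2$ is the $\sqrt y$-smooth part:
\[
\Psi(x,y)=\sum_{\substack{n_1\ge 1\\ P^-(n_1)>\sqrt y,\ P^+(n_1)\le y}}\Psi(x/n_1,\sqrt y).
\]
Applying the induction hypothesis term by term — legitimate because $r(\sqrt y)<r(y)$ and the claim is available for every argument $\ge 1$ — gives $\Psi(x/n_1,\sqrt y)\le C_0(x/n_1)e^{-\log(x/n_1)/\log y}$, and summing over $n_1$ yields
\[
\Psi(x,y)\ \le\ C_0\,x\,e^{-u}\sum_{n_1}n_1^{-1+1/\log y}\ =\ C_0\,x\,e^{-u}\prod_{\sqrt y<p\le y}\bigl(1-p^{-1+1/\log y}\bigr)^{-1}.
\]
Here $p^{-1+1/\log y}=p^{-1}p^{1/\log y}\le e\,p^{-1}\le e/\sqrt y\le\tfrac12$ for $y$ large, and $\sum_{\sqrt y<p\le y}p^{-1}=\log 2+o(1)$ by Mertens' theorem, so the product is at most $\exp\bigl(2\sum_{\sqrt y<p\le y}p^{-1+1/\log y}\bigr)\le\exp\bigl(2e(\log 2+o(1))\bigr)\le e^{2e}$. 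Hence $\Psi(x,y)\le C_0e^{2e}\,xe^{-u}=\bigl(e^{2e-u/2}\bigr)\,C_0\,xe^{-u/2}\le C_0\,xe^{-u/2}$, the last inequality because $u>4e$ forces $e^{2e-u/2}\le 1$. This closes the induction, with $C_0=\max\{e^{2e},\,C_0(Y_0)\}$.

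The point I expect to need the most care is that this recursion must be run with one fixed constant $C_0$, since it descends through $\asymp\log_2\log y$ levels and any per-level loss would be fatal; it works only because the split produces the exponent $-u$ rather than the target $-u/2$, so the hypothesis $u>4e$ furnishes exactly the slack $e^{-u/2}\le e^{-2e}$ needed to re-absorb the bounded Euler product. This is also why one cannot simply invoke Rankin's inequality $\Psi(x,y)\le x^\sigma\prod_{p\le y}(1-p^{-\sigma})^{-1}$: the product over all $p\le y$ is of size at least $\asymp\log y$ (from the small primes, where $p^{-\sigma}\asymp p^{-1}$), which only gets compensated by $x^{\sigma-1}=e^{-u}$ when $u\gg\log_2 y$, whereas restricting to primes in $(\sqrt y,y]$ replaces that $\log y$ by $O(1)$, since $\sum_{\sqrt y<p\le y}1/p=O(1)$.
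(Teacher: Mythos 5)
Your proof is correct. The paper itself gives no argument — it simply cites \cite[Theorem III.5.1]{Ten} (a strong form of the de Bruijn/Tenenbaum upper bound for $\Psi(x,y)$) and moves on — so you have taken a genuinely different, fully self-contained route. Your argument is the classical elementary iteration: factor every $y$-smooth $n$ as $n_1n_2$ with $n_1$ built from primes in $(\sqrt y,y]$, apply the desired bound at level $\sqrt y$, and absorb the resulting Euler product $\prod_{\sqrt y<p\le y}(1-p^{-1+1/\log y})^{-1}=O(1)$ into the spare factor $e^{-u/2}$ available when $u>4e$. The induction is well-founded because the rank $\lceil\log_2(\log y/\log Y_0)\rceil$ strictly decreases under $y\mapsto\sqrt y$, and the only inputs are Mertens' theorem, the trivial bound $\Psi(x,y)\le(1+\log_2 x)^{\pi(y)}$ for bounded $y$, and the observation (correctly flagged as the crux) that one must run the recursion with a single fixed constant $C_0$ so that no loss accumulates over the $\asymp\log\log\log y$ levels; this works precisely because the recursive step produces $e^{-u}$ rather than the target $e^{-u/2}$. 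Your closing remark about why naive Rankin fails is also accurate: the full product $\prod_{p\le y}(1-p^{-\sigma})^{-1}\asymp\log y$ comes from the small primes and only cancels against $x^{\sigma-1}=e^{-u}$ once $u\gg\log\log y$, whereas the restricted product over $(\sqrt y,y]$ is bounded. What the paper's citation buys is brevity and (in principle) a sharper savings $e^{-u\log u(1+o(1))}$; what your proof buys is that the entire paper becomes independent of that reference for this particular ingredient, and the argument is elementary and short. Both establish the stated bound; yours is a valid substitute.
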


\begin{proof}
 Standard.  See e.g., \cite[Theorem III.5.1]{Ten}.
\end{proof}

\begin{lem}\label{rough}
 We have $\# \{n\le x : P^-(n) > z \} \order \frac{x}{\log z}$ uniformly for $x\ge 2z\ge 4$.
 The upper bound holds uniformly for $x\ge z\ge 2$.
\end{lem}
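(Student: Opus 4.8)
Write $\Phi(x,z)=\#\{n\le x:P^-(n)>z\}$ (counting $n=1$, with the convention $P^-(1)=+\infty$). The plan is to establish the two inequalities separately; in each, the bulk of the range is handled by the fundamental lemma of sieve theory applied to the sequence of all $n\le x$ sifted by the primes $p\le z$ (here each remainder $\lfloor x/d\rfloor-x/d$ is $O(1)$ and there are at most $D$ of them, $D$ the sifting level, so the total remainder is $O(D)$), while a short range with $z$ close to $x$ is handled by Chebyshev's estimates for $\pi$.

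For the upper bound, valid for $2\le z\le x$: using the monotonicity $\Phi(x,z)\le\Phi(x,w)$ for $2\le w\le z$, put $w=\min(z,\sqrt x)\le\sqrt x$. The fundamental lemma (or any upper-bound sieve) with level $D=\sqrt x$ gives $\Phi(x,w)\ll x\prod_{p\le w}(1-1/p)\ll x/\log w$ by Mertens (the remainder $O(\sqrt x)$ being of smaller order than the main term $\asymp x/\log w$ when $x$ is large, and the whole bound being trivial for bounded $x$). If $w=z$ this is the claim; if $w=\sqrt x\le z$ then $x/\log w=2x/\log x\le 2x/\log z$ since $\log z\le\log x$, so $\Phi(x,z)\ll x/\log z$ in all cases.

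For the lower bound, valid for $x\ge 2z\ge 4$: if $x=O(1)$ then $\Phi(x,z)\ge 1\gg x/\log z$, so assume $x$ large. If $z\le x^{1/10}$, apply the fundamental lemma with level $D=\sqrt x$; the sifting ratio $\log D/\log z\ge 5$ is then a large enough absolute constant that the lemma delivers a matching lower bound $\Phi(x,z)\gg x\prod_{p\le z}(1-1/p)\gg x/\log z$, the remainder being negligible as before. If instead $z>x^{1/10}$, then $\tfrac{1}{10}\log x<\log z\le\log x$, so $x/\log z\asymp x/\log x$, while $z\le x/2$ forces $\Phi(x,z)$ to be at least the number of primes in $(x/2,x]$, which is $\gg x/\log x$ by Chebyshev. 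I expect the only point requiring any care to be the choice of sieve parameters: the sifting ratio must be a large enough constant for the fundamental lemma's lower bound to be positive, yet the total remainder must stay below the main term; the pair (level $\sqrt x$, threshold $z=x^{1/10}$) achieves both, and everything else is classical.
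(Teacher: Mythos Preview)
Your argument is correct and follows essentially the same standard route as the paper, which simply cites the Buchstab-type asymptotic from Tenenbaum for the main range, the prime number theorem when $x/z$ is bounded, and Bertrand's postulate for small $x$. The only cosmetic difference is that you invoke the fundamental lemma (requiring a large sifting ratio) rather than the full Buchstab asymptotic, which forces you to absorb the intermediate range $x^{1/10}<z\le x/2$ into the Chebyshev/prime-counting case; this is harmless, and your remark that the threshold $x^{1/10}$ could be adjusted to ensure the fundamental lemma's lower bound constant is positive is exactly the right caveat.
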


\begin{proof}
 Standard.  Use the asymptotic formula \cite[Theorem II.6.3 and (22)]{Ten} when $x$ is large and $x/z$ is large,
 the prime number theorem for $x/z$ bounded, and Bertrand's postulate for small $x$.
\end{proof}

\begin{lem}\label{gensieve}
Suppose that $\rho$ a non-negative integer valued multiplicative function
with $\rho(p)\le \min(\kappa,p-1)$ for every prime $p$, and that for any prime $p$, $\Omega_p$ is some
set of $\rho(p)$ residue classes modulo $p$.  Then
\[
\# \{1\le n\le x : \forall p, n \!\!\mod p \not\in \Omega_p \} \ll_\kappa x \prod_{p\le x} \(1-\frac{\rho(p)}p \). 
\]
\end{lem}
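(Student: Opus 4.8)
Lemma \ref{gensieve} is the fundamental sieve bound (a "small sieve" / Johnsen–Gallagher type estimate), and the natural route is to invoke a standard large-sieve-free upper-bound sieve — either Selberg's sieve or the Montgomery sieve — with the dimension (sieve density) playing the role of $\kappa$. First I would set $P = \prod_{p\le z} p$ with $z$ a parameter to be chosen (ultimately $z = x$ or $z = \sqrt x$; since $\rho(p)\le p-1$ forces $\rho(p)=0$ whenever $p>x$ is irrelevant, one can just sieve by all primes up to $x$), and let $S(x) = \#\{n\le x : n \bmod p \notin \Omega_p \text{ for all } p\}$. The sifted set is $\{n\le x\}$ with the sifting conditions "$n\not\equiv \omega \pmod p$" for $\omega\in\Omega_p$; the number of residues removed mod $p$ is exactly $\rho(p)$, and $\rho$ is multiplicative, so $\rho$ extends to squarefree moduli $d$ with $\rho(d) = \prod_{p\mid d}\rho(p)$ residue classes mod $d$ (by CRT). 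The key arithmetic input is that the count of $n\le x$ in a fixed residue class mod $d$ is $x/d + O(1)$, and summing over the $\rho(d)$ admissible classes gives a remainder term $O(\rho(d))$, with $\rho(d)\le \kappa^{\omega(d)}$.

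**Key steps.** (1) Fundamental lemma / Selberg sieve setup: write the Selberg upper bound $S(x) \le \sum_{n\le x}\bigl(\sum_{d\mid P,\ d\mid \prod_{\omega}(n-\omega)\ \text{type}}\lambda_d\bigr)^2$ — more cleanly, use the combinatorial sieve in the form $S(x) \le x \prod_{p\le z}(1-\rho(p)/p) \cdot (1 + \text{error from sieve weights}) + (\text{remainder } R)$, where $R \ll \sum_{d\le D} \mu^2(d) 3^{\omega(d)}\rho(d) \ll \sum_{d\le D}(3\kappa)^{\omega(d)} \ll D(\log D)^{O_\kappa(1)}$. (2) Choose the sieving level $D = x^{1/2}/\log^A x$ or just take $z$ small enough that the main term dominates; actually the slickest path is: sieve only by primes $p\le z$ with $z=\exp\{\sqrt{\log x}\}$ or similar, getting $S(x)\ll x\prod_{p\le z}(1-\rho(p)/p)$ from the fundamental lemma (dimension $\le\kappa$, remainder negligible since $D = z^{O_\kappa(1)}$ is tiny), and then handle the tail primes $z<p\le x$ trivially, noting $\prod_{z<p\le x}(1-\rho(p)/p)\ge \prod_{z<p\le x}(1-1/2)^{?}$... wait — that product can be as small as we like, which actually only helps the bound (we want an upper bound on $S(x)$ and the stated RHS has the full product). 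So the honest statement is that one must show $S(x) \ll_\kappa x\prod_{p\le x}(1-\rho(p)/p)$, and for that the fundamental lemma with a small $z$ is insufficient because the tail product $\prod_{z<p\le x}(1-\rho(p)/p)$ could be much smaller than $\prod_{z<p\le x}(1-1/p)^\kappa \asymp (\log z/\log x)^\kappa$.

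**The main obstacle.** The real difficulty is thus exactly the tail: one needs a sieve that is efficient up to level close to $x^{1/2}$ so that the product over \emph{all} primes $p\le x$ appears, not just $p\le x^{1/4}$. This is precisely what the Selberg sieve (or Montgomery's large sieve inequality in the form $S(x) \le (x+D^2)/\sum_{d\le D}\mu^2(d)\prod_{p\mid d}\frac{\rho(p)}{p-\rho(p)}$, but that is the wrong shape when $\rho(p)$ is close to $p$) delivers: with Selberg's $\lambda_d$ supported on $d\le D$ with $D^2 = x/\log^{2\kappa+2}x$, one gets $S(x) \le \frac{x}{\sum_{d\le D}g(d)} + O(D^2 3^{\omega}\cdots)$ where $g$ is built from $\rho/(1-\rho/p)$, and then the standard comparison $\sum_{d\le D}g(d) \gg_\kappa 1/\prod_{p\le D}(1-\rho(p)/p)$ (valid because $\rho(p)\le\kappa$ bounds the dimension, using $\rho(p)\le p-1$ to guarantee the Euler factors make sense) finishes it, after checking $\prod_{D<p\le x}(1-\rho(p)/p) \gg_\kappa 1$ since there are $\ll \log x/\log D = O_\kappa(1)$ such primes... no, that's false too. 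The clean resolution: $\prod_{D < p\le x}(1-\rho(p)/p)^{-1} \le \prod_{D<p\le x}(1-1/p)^{-\kappa} \ll_\kappa 1$ because for $D = x^{1/2 - o(1)}$ the interval $(D,x]$ contains primes but $\sum_{D<p\le x}1/p \ll 1$, hence the product of $(1-1/p)^{-\kappa}$ over that range is $O_\kappa(1)$ — \emph{that} is the point, and it is where $D\asymp x^{1/2}$ (not $x^{1/4}$) is essential, forcing use of Selberg rather than the crude fundamental lemma. I would structure the write-up as: (a) reduce to sieving by primes $\le x$; (b) apply Selberg's sieve at level $D$ with $D^2 \asymp x/(\log x)^{2\kappa+2}$, using $\rho(p)\le\min(\kappa,p-1)$ to control both the dimension and the well-definedness of the local factors; (c) bound the Selberg main term below by $c_\kappa/\prod_{p\le D}(1-\rho(p)/p)$ via the standard manipulation $\sum_{d\le D}g(d)\ge \prod_{p\le D}(1+g(p)+g(p^2)+\cdots)$ truncated, comparing to the Euler product; (d) bound the remainder $\ll D^2\prod_{p\le D}(1+3\rho(p)/p) \ll D^2 (\log x)^{3\kappa} = o(x)$; (e) absorb the gap $(D,x]$ using $\sum_{D<p\le x} 1/p = O(1)$. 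Step (c)–(e) is the crux; (a),(b),(d) are routine bookkeeping.
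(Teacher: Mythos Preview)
Your Selberg-sieve plan is essentially correct, but it differs from the paper's route. The paper simply invokes Montgomery's large sieve in the form
\[
S(x) \le \frac{x+D^2}{L(D)}, \qquad L(D)=\sum_{q\le D}\mu^2(q)\prod_{p\mid q}\frac{\rho(p)}{p-\rho(p)},
\]
together with the Halberstam--Richert lower bound $L(D)\gg_\kappa \prod_{p\le D}(1-\rho(p)/p)^{-1}$ (their Lemma~4.1), and then closes the gap $D<p\le x$ exactly as you do in step~(e). You actually write down this large-sieve inequality and then dismiss it as ``the wrong shape when $\rho(p)$ is close to $p$''; that dismissal is unfounded, since $\rho(p)\le p-1$ guarantees $p-\rho(p)\ge 1$ and the sum $L(D)$ is perfectly well-defined. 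The large sieve is in fact the cleaner route here precisely because the remainder is already absorbed into the single term $D^2/L(D)$, whereas in your Selberg argument you must separately control $\sum_{d\le D^2}3^{\omega(d)}\rho(d)$.

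One genuine (though easily repaired) slip: in step~(d) you only show the remainder is $o(x)$, but the main term can be as small as $x(\log x)^{-\kappa}$, so $o(x)$ is not enough. You need the remainder to be $O_\kappa\bigl(x\prod_{p\le x}(1-\rho(p)/p)\bigr)$, which forces $D^2\asymp x/(\log x)^{C\kappa}$ for a suitably large constant $C$ (e.g.\ $C=4$ works) rather than the $2\kappa+2$ you wrote. After that adjustment both arguments are complete, and they share the same decisive input: the HR-type lower bound on $L(D)$ (your step~(c)) together with the observation that $\prod_{D<p\le x}(1-\rho(p)/p)\gg_\kappa 1$ when $D=x^{1/2-o(1)}$.
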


\begin{proof}
 This is a standard application of Montgomery's Large Sieve, see
 e.g. \cite[Corollay I.4.6.1]{Ten}, together with an
 estimate for the denominator in the sieve bound, e.g. \cite[Lemma 4.1]{HR}.
 It does not seem to appear explicitly in the literature anywhere, to the authors knowledge.
  \end{proof}

\begin{lem}\label{primecor}
 Let $z\ge 2$, $x>2z$, and suppose $B$ and $C$ are distinct, even, positive integers.  Then
 \[
    \# \{ h\le x: P^-(h) \ge z, Bh+1 \text{ prime}\} \ll \frac{x}{(\log z)(\log x)}\, \frac{B}{\phi(B)} 
  \ll  \frac{x \log_2 (2B) }{(\log z)(\log x)}.
 \]
and
 \begin{align*}
  \# \{ h\le x: P^-(h) \ge z, Bh+1 \text{ prime}, Ch+1 \text{ prime} \} &\ll \frac{x}{(\log z)(\log^2 x)}
  \prod_{p|BC(B-C)} \frac{p}{p-1} \prod_{p|(B,C)} \frac{p}{p-1} \\
  &\ll  \frac{x (\log_2 BC)^2}{(\log z)(\log^2 x)}.
 \end{align*}
\end{lem}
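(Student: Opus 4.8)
The plan is to treat both statements as applications of the general sieve bound in Lemma \ref{gensieve}, applied to the sequence $n = h$ ranging over $h \le x$, sieving out those $h$ for which either $P^-(h) < z$ or $Bh+1$ (resp.\ $Bh+1$ and $Ch+1$) fails to be prime. For the first statement, I would sieve by small primes $p$: to ensure $P^-(h) \ge z$ I remove the residue class $h \equiv 0 \pmod p$ for all $p \le z$; to ensure $Bh+1$ has no prime factor $p$ in a suitable range $z < p \le x$ (necessary for $Bh+1$ to be prime, since $Bh + 1 > x$ exceeds any such $p$... actually $Bh+1$ may be as large as $Bx+1$, so one sieves primes up to roughly $(Bx)^{1/2}$ or, more economically, simply primes $p \le x$) I remove the class $h \equiv -B^{-1} \pmod p$ for primes $p \nmid B$ in the chosen range, and for $p \mid B$ there is no class to remove. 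Set $\rho(p) = 2$ for $p \le z$ coprime to $B$ (removing both $0$ and $-B^{-1}$), $\rho(p) = 1$ for $p \le z$ dividing $B$ or for $p > z$ coprime to $B$, and $\rho(p) = 0$ otherwise; then $\kappa = 2$ and Lemma \ref{gensieve} gives the count is
\[
\ll x \prod_{\substack{p \le z \\ p \nmid B}} \Bigl(1 - \frac2p\Bigr) \prod_{\substack{p \le z \\ p \mid B}} \Bigl(1 - \frac1p\Bigr) \prod_{\substack{z < p \le x \\ p \nmid B}} \Bigl(1 - \frac1p\Bigr).
\]

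The next step is to extract the asserted shape from this product. The third factor is $\asymp (\log z)/(\log x) \cdot \prod_{p \mid B,\, p > z}(1-1/p)^{-1}$ by Mertens; the first factor is $\asymp 1/(\log z)^2$ again by Mertens, with a correction $\prod_{p \mid B,\, p \le z}(1-2/p)^{-1}$; and the second factor contributes $\prod_{p \mid B,\, p \le z}(1-1/p)$. Multiplying, the powers of $\log z$ combine to $1/(\log z)^2 \cdot (\log z) = 1/\log z$, leaving $\frac{x}{(\log z)(\log x)}$ times a correction factor built out of primes dividing $B$; one checks this correction is $\ll \prod_{p \mid B} \frac{p}{p-1} = B/\phi(B)$, and then the classical bound $B/\phi(B) \ll \log_2(2B)$ gives the second displayed inequality. (A slight care is needed because for $p \mid B$ with $p \le z$ one only gets $\rho(p) = 1$ rather than $2$, which is exactly where the factor $B/\phi(B)$ comes from — losing one sieve condition per prime dividing $B$.)

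For the two-prime statement the setup is analogous but now, for a generic small prime $p$ (coprime to $BC$), we remove three residue classes: $h \equiv 0$, $h \equiv -B^{-1}$, and $h \equiv -C^{-1} \pmod p$. These are genuinely three distinct classes unless $B \equiv C \pmod p$, i.e.\ unless $p \mid B - C$ (the class $0$ coincides with $-B^{-1}$ only if $p \mid 1$, impossible, so the only collision is between the two shifted classes); for such $p$ we get only $\rho(p) = 2$. Similarly for $p \mid BC$ we lose conditions. So take $\kappa = 3$, set $\rho(p) = 3$ for the generic primes $p \le z$, $\rho(p) = 2$ for $p \mid B - C$ with $p \le z$ (and appropriately smaller for $p \mid BC$), and $\rho(p) = 2$ for generic $p$ with $z < p \le x$ (removing the two shifted classes but not $0$). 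Then Lemma \ref{gensieve} produces $\prod_{p \le z}(1-3/p)$ type main term contributing $1/(\log z)^3$, times $\prod_{z < p \le x}(1-2/p) \asymp (\log z)^2/(\log x)^2$, for a net $\frac{x}{(\log z)(\log^2 x)}$; the deficiencies at primes dividing $BC(B-C)$ and at primes dividing $(B,C)$ (where the two shifted classes themselves collide even among larger primes, costing a further condition in the $z < p \le x$ range) produce exactly the two correction products $\prod_{p \mid BC(B-C)} \frac{p}{p-1}$ and $\prod_{p \mid (B,C)} \frac{p}{p-1}$. The final bound $(\log_2 BC)^2$ follows since both products are $\ll (BC/\phi(BC))^{O(1)} \ll \log_2(BC)$ crudely, and $(B-C) \mid$ something of size $\le BC$ so $\prod_{p \mid B-C}\frac{p}{p-1} \ll \log_2(BC)$ as well.

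The main obstacle — really the only nonroutine point — is the careful bookkeeping of how many residue classes one actually removes modulo each prime $p$, i.e.\ correctly identifying when the classes $0$, $-B^{-1}$, $-C^{-1}$ coincide modulo $p$ (only when $p \mid B-C$, for the latter two) and hence pinning down $\rho(p)$ so that the lost sieve conditions assemble into precisely the singular-series factors $B/\phi(B)$, $\prod_{p\mid BC(B-C)}\frac{p}{p-1}$, and $\prod_{p\mid(B,C)}\frac{p}{p-1}$ claimed in the statement; everything else is Mertens' theorem and the standard estimate $n/\phi(n) \ll \log_2 n$. One should also remember the hypothesis $x > 2z$ so that the sieve range $z < p \le x$ is nonempty and Lemma \ref{rough}-type input is available, and that $B \ne C$ (both even) guarantees $B - C$ is a nonzero even integer so the products over $p \mid B-C$ make sense.
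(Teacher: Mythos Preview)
Your proposal is correct and follows exactly the approach the paper indicates: the paper's own proof reads in its entirety ``Completely routine exercise using Lemma~\ref{gensieve}.\ If $z=2$ or $\log z \gg \log x$, these follow from classical literature, e.g.\ \cite[Theorem 2.2]{HR}'', and you have carried out that routine exercise with the right identification of $\rho(p)$ and the resulting singular-series factors. One small technical cleanup: where you write that one sieves ``simply primes $p\le x$'', it is cleaner to impose the shifted-prime conditions only for $p\le x^{1/2}$ (taking $\rho(p)=0$ beyond that in Lemma~\ref{gensieve}, so the product is unaffected up to constants), since otherwise an $h$ with $Bh+1$ itself equal to a prime $p\le x$ lies in the removed class $-B^{-1}\pmod p$ and would be wrongly excluded from the sifted set; sieving to $x^{1/2}$ the sifted set genuinely contains all desired $h$ except the $O(x^{1/2})$ with $Bh+1\le x^{1/2}$.
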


\begin{proof}
Completely routine exercise using Lemma \ref{gensieve}.  If $z=2$ or $\log z \gg \log x$, these
follow from classical literature, e.g. \cite[Theorem 2.2]{HR}.
\end{proof}

%
%
\section{Proof of Theorem \ref{main}: upper bounds}\label{sec:upper}
%
%

The upper bound in part (i) is proven in \cite[Theorem 1.2]{MPP}.

The upper bound in part (iii) is very easy.  Mertens' theorem implies that
\[
 N(x,y) \le \sum_{y<p\le x} \frac{x}{p-1} =x(\log_2 x-\log_2 y + O(1/\log y)).
\]
If $x/y\to \infty$, the right side is $\sim x \frac{\log(x/y)}{\log x}$.  In the larger range $y\le x/2$, 
the right side is $O(x \frac{\log(x/y)}{\log x})$.

Finally, we prove part (ii).
Let  $z=x/y$, $\gamma=\frac{1}{\a \log 4}$ and $w=\fl{\gamma \log_2 z}$.  The hypotheses on $\alpha$
imply that $w \le \log_2 z$.
  Consider first integers $n\le x$ with $\Omega^*(n,z)>2w$.  By Lemma \ref{tails},
the number of such $n$ is 
\[
\ll \frac{x}{(\log z)^{Q(2\gamma)}\sqrt{\log_2 z}}  = \frac{x}{(\log x)^{\delta+\a-1-\log \a/\log 2}\sqrt{\log_2 z}}.
\]
Next, consider integers of the form $n=(p-1)m$ with $m\le z$, $\Omega^*(p-1,z)=i$ and $\Omega^*(m)=j$, where $i+j\le 2w$.
With $i$ and $j$ fixed, we may use Lemma \ref{Timup}, provided that $i\le 1.99\log_2 z$, together with Lemma \ref{Halup}, to bound the number of such $n$ by
\[
 \ll \ssum{m\le z \\ \Omega^*(m)=j}  \frac{x(\log_2 z)^i}{i! m (\log x)(\log z)} \ll\frac{x(\log_2 z)^{i+j}}{i! j! (\log x)(\log z)}.
\]
By Lemma \ref{Poisson}, the total number of integers counted is 
\begin{align*}
 &\ll \frac{x}{(\log x)(\log z)} \sum_{i+j\le 2w} \frac{(\log_2 z)^{i+j}}{i!j!}\\
 &\ll  \frac{x}{(\log x)(\log z)} \sum_{h\le 2w} \frac{(2\log_2 z)^h}{h!} \\
 &\ll \frac{x}{(\log x)^{\delta+\a-1-\log \a/\log 2}\max(1,\theta)}.
\end{align*}
If $i\ge 1.99\log_2 z$, then $j\le 0.01\log_2 z$.  The number of such integers is bounded above by
\[
 \ssum{m\le z \\ \Omega^*(m,z)\le 0.01\log_2 z} \pi(x/m) \ll \frac{x}{\log x} \ssum{m\le z \\ \Omega^*(m,z)\le 0.01\log_2 z} \frac{1}{m}
 \ll \frac{x}{\log x} (\log z)^{0.01+0.01\log 100} \ll \frac{x}{(\log x)^{0.9}},
\]
using Lemma \ref{Halup}, which is much smaller than the bound for the other cases.
This completes the proof of the upper bound in part (ii).

%
%
\section{Proof of Theorem \ref{main} (iii) lower bound when $\theta\to -\infty$}
%
%

Here we prove the lower bound claim in part (iii) of the theorem, except in the case where $\theta$ is positive
and bounded.  We begin with a Lemma, which is similar to Lemma \ref{tails}.

\begin{lem}\label{sumphi}
 Uniformly for $z$ sufficiently large and $0 \le \lambda \le 0.7$, we have
 \[
  \ssum{P^+(m)\le z \\ \Omega(m)>(1+\lam)\log_2 z} \frac{1}{\phi(m)} \ll  \frac{(\log z)^{1-Q(1+\lam)}}{\max(1,\lam\sqrt{\log_2 z})}.
 \]
\end{lem}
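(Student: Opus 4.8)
The plan is to reduce the sum over $\phi(m)$ to a sum over $m$ itself, then mimic the proof of Lemma \ref{tails}. First I would write $\frac{1}{\phi(m)} = \frac{1}{m}\cdot\frac{m}{\phi(m)}$ and recall that $\frac{m}{\phi(m)} = \prod_{p\mid m}\frac{p}{p-1}$, which we can fold into a sum with an extra multiplicative weight. The cleanest route is to invoke Lemma \ref{recip} with $\xi=0$ and $c=1$ (both within the allowed ranges, $0\le\xi\le\frac{1}{5\log x}$ trivially and $0\le c\le 10$), taking $x=z$ there; this gives, for each admissible $k$,
\[
 \ssum{P^+(m)\le z \\ \Omega(m,z)=k} \frac{1}{m}\pfrac{m}{\phi(m)} \ll \frac{(\log_2 z)^k}{k!},
\]
valid for $k\le 1.8\log_2 z$. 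Note that for $m$ with $P^+(m)\le z$ we have $\Omega(m,z)=\Omega(m)$, so this is exactly a bound on the contribution of the $m$ with a fixed value of $\Omega(m)$.

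Next I would split the sum according to the size of $\Omega(m)$. Write $k_0 = \lceil(1+\lam)\log_2 z\rceil$. For the range $k_0 \le k \le 1.8\log_2 z$ (this is a nonempty range precisely because $\lam\le 0.7$, so $1+\lam\le 1.7<1.8$, with a little room to spare), sum the bound above over $k$ and apply the Poisson tail estimate of Lemma \ref{Poisson} with $v=\log_2 z$: the sum $\sum_{k\ge k_0} \frac{(\log_2 z)^k}{k!}$ is $\ll e^{\log_2 z}\cdot\frac{e^{-\log_2 z \cdot Q(1+\lam)}}{\max(1,\lam\sqrt{\log_2 z})}$, which is exactly $\frac{(\log z)^{1-Q(1+\lam)}}{\max(1,\lam\sqrt{\log_2 z})}$, the claimed bound. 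For the tail range $k>1.8\log_2 z$, I would use a Rankin-type argument: bound $1$ by $(1.8)^{\Omega(m)-1.8\log_2 z}$ and estimate $\sum_{P^+(m)\le z}\frac{1.8^{\Omega(m)}}{\phi(m)}$ by an Euler product, namely $\prod_{p\le z}\bigl(1-\tfrac{1.8}{p}\bigr)^{-1}(1+O(1/p))$-type factors, giving $\ll (\log z)^{1.8}$ up to constants; multiplying by $(1.8)^{-1.8\log_2 z} = (\log z)^{-1.8\log 1.8}$ yields $\ll (\log z)^{1.8 - 1.8\log 1.8} = (\log z)^{1-Q(1.8)}$ (using $Q(1.8)=1.8\log 1.8 - 0.8$), which since $Q$ is increasing on $(1,\infty)$ and $1+\lam\le 1.7<1.8$ is dominated by the main term.

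The main obstacle is a bookkeeping one rather than a conceptual one: one must check that Lemma \ref{recip} applies with the parameter choices $\xi=0$, $c=1$, $x=z$ — in particular that the product over primes $p\le z$ with the weight $(p/(p-1))^c$ really does contribute only an absolutely bounded multiplicative constant and not an extra power of $\log z$ — and one must verify the Rankin tail estimate's Euler product converges with the right exponent. Both are routine given the tools already assembled (Lemma \ref{recip} and the standard multiplicative-function estimates of \cite[Theorem 01]{HT} cited in the proof of Lemma \ref{tails}), so the proof should be short.
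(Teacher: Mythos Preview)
Your proposal is correct and follows essentially the same route as the paper's own proof: split at $\Omega(m)=1.8\log_2 z$, handle the lower range via Lemma~\ref{recip} (with $x=z$, $\xi=0$, $c=1$) together with Lemma~\ref{Poisson}, and treat the tail by the Rankin trick with parameter $1.8$ applied directly to $\sum_{P^+(m)\le z}1.8^{\Omega(m)}/\phi(m)$. The paper writes out that tail Euler product explicitly and reaches the same $(\log z)^{1-Q(1.8)}$ bound you describe.
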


\begin{proof}
Let $w=(1+\lam)\log_2 z$.
 We use Lemma \ref{recip} to take care of the summands with $w \le \Omega(m) \le 1.8\log_2 z$ and a simple 
 ``Rankin trick'' for the rest, as in the proof of Lemma \ref{recip}.  We obtain
 \[
   \ssum{P^+(m)\le z \\ \Omega(m)>w} \frac{1}{\phi(m)} \ll \sum_{w\le j\le 1.8\log_2 z} \frac{(\log_2 z)^j}{j!} 
   +\sum_{P^+(m)\le z} \frac{1.8^{\Omega(m)-1.8\log_2 z}}{\phi(m)}.
 \]
Use Lemma \ref{Poisson} for the first sum.  The second sum equals
\[
 (\log z)^{-1.8\log 1.8} \prod_{p\le z} \(1 + \frac{1.8}{p-1} + \frac{1.8^2}{p(p-1)}+\cdots\) \ll (\log z)^{1-Q(1.8)},
\]
which is smaller than the bound claimed.
\end{proof}

Let $z=x/y$.
First assume that $\alpha \le 1/3$.  Let $r(n)$ be the number of ways to write 
$n=(p-1)m$ where $y<p\le x$ is prime and $m$ is any integer.  Note that $m\le z$ is very small.  
By the upper bound calculation,
 $M_1 := \sum_{n\le x} r(n) \sim x \frac{\log z}{\log x}$ if $z\to \infty$ 
 (and $M_1\gg  x \frac{\log z}{\log x}$ 
 in the larger range $y\le x/2$).
The quantity $M_2'=\sum_{n\le x} r(n)^2-r(n)$ counts solutions of $(p_1-1)m_1=(p_2-1)m_2$ with $p_1\ne p_2$.
Put $a=(m_1,m_2)$, $m_1=ab$, $m_2=ac$, $g=(p_1-1,p_2-1)$, so that $p_1-1=gc$ and $p_2-1=gb$.
Note that $abc\le z^2 \ll x^{1/10}$.
By Lemma \ref{primecor}, given $a,b,c$ the number
of choices for $g$ is $O(\frac{x(\log_2 x)^{2}}{abc\log^2 x})$.  Hence
\[
 M_2' \ll \frac{x(\log_2 x)^2}{\log^2 x} \sum_{a,b,c\le z} \frac{1}{abc} \ll \frac{x(\log_2 x)^2}{(\log x)^{2-3\a}} = 
 o(M_1).
\]
By simple inclusion-exclusion, $N(x,y) \ge M_1 - M_2' \sim  x \frac{\log z}{\log x}$ if $z\to \infty$,
and in the larger range $y\le x/2$ we have  $N(x,y) \ge M_1-M_2' \gg x \frac{\log z}{\log x}$.

Now assume that $\alpha \ge 1/3$.   It follows easily from $\theta\to-\infty$ that $z\to\infty$ as well.
Let $r(n)$ be the number of ways to write $n=(p-1)m$, with $p$ prime, $p>y$,
and  $\max(\Omega(m),\Omega(p-1,z)) \le w$, where  $w = \fl{\log_2 z - (\theta/2)\sqrt{\log_2 z}}$.
The hypotheses on $\alpha$ imply that $\log_2 z \le w \le 1.7\log_2 z$.
We have
\be\label{M1-iii}
 M_1 = \sum_{m\le z} \big( \pi(x/m)-\#\{p\le x/m:\Omega(p-1,z)>w\} \big) - O\Bigg( \ssum{m\le z \\
 \Omega(m)>w} \pi(x/m) \Bigg).
\ee
Applying Lemma \ref{sumphi}, we quickly find that the big-$O$ term in \eqref{M1-iii} is
\[
 \ll \frac{x}{\log x} \ssum{m\le z \\ \Omega(m) > w} \frac{1}{m} \ll \frac{x(\log z)^{1-Q(w/\log_2 z)}}{(-\theta) \log x}
  = o\pfrac{x\log z}{\log x}.
\]
Next, consider a prime $p\le x/m$ with $\Omega(p-1,z)>w$.   The number of primes with $P^+(p-1)\le z$
is, by Lemma \ref{smooth},  $O(x/(m\log^{10}x))$.  If $P^+(p-1)>z$, let $k$  
be the largest factor of $p-1$ which is composed only 
of primes $\le z$, so that $k\le x/mz$ and $\Omega(k)>w$.  By Lemma \ref{primecor},
the number of such primes $p$ is, for a given $k$, $O(\frac{x}{m\phi(k)\log x \log z})$.  Thus the total 
number of such primes is, using Lemma \ref{sumphi}, bounded above by
\[
 \ll \frac{x}{m\log x(\log z)} \ssum{P^+(k)\le z \\ \Omega(k)>w} \frac{1}{\phi(k)} \ll \frac{x(\log z)^{-Q(w/\log_2 z)}}{(-\theta) m\log x}
 = o\pfrac{x}{m\log x}.
\]
We also have that 
\[
\sum_{m\le z} \pi(x/m) \sim \frac{x}{\log x} \sum_{m\le z} \frac{1}{m} \sim \frac{x\log z}{\log x},
\]
and therefore conclude from \eqref{M1-iii} that
\be\label{M1-asym}
M_1 \sim \frac{x\log z}{\log x} \qquad (\theta\to -\infty).
\ee

Arguing as in \cite{MPP}, $M_2':=\sum_{n\le x} r(n)^2-r(n)$ counts the number of solutions
of $(p_1-1)m_1=(p_2-1)m_2$ with $\Omega(p_i-1,z)\le w$,  $\Omega(m_i)\le w$ for $i=1,2$, and $p_1\ne p_2$.
We may assume $p_1<p_2$.
Again put $a=(m_1,m_2)$, $m_1=ab$, $m_2=ac$, $g=(p_1-1,p_2-1)$, so that $p_1-1=gc$ and $p_2-1=gb$.
Let $g=dh$, where $P^+(d)\le z < P^-(h)$.  Observe that $d \le z^{\Omega(d)} \le z^w \le x^{1/10}$.
Given $a,b,c,d$, we bound the number of $h$ with $hcd+1$ and $hbd+1$ both prime using Lemma \ref{primecor}, and get
\[
M_2' \ll \frac{x(\log_2 z)^{2}}{(\log^2 x)\log z} \ssum{a,b,c,d\le z \\ \Omega(abcd)\le 2w} \frac{1}{abcd}.
\]
Since
\begin{align*}
 \ssum{a,b,c,d\le z \\ \Omega(abcd)\le 2w} \frac{1}{abcd} &\le \sum_{a,b,c,d\le z} \frac{2^{2w-\Omega(abcd)}}{abcd} 
 \le 2^{2w} \prod_{p\le z} \(1-\frac{1}{2p} \)^{-4} \\
 &\ll 2^{2w} (\log z)^2 \ll (\log z)^{2+\log 4} \exp\{(\log 4) (-\theta/2) \sqrt{\log_2 z} \},
\end{align*}
we get
\begin{align*}
 M_2' &\ll \frac{x\log z}{\log x} \; \frac{(\log z)^{\log 4}}{\log x} (\log_2 z)^{2} \exp\{(\log 4)(-\theta/2)\sqrt{\log_2 z} \} \\
&=\frac{x\log z}{\log x} \; (\log_2 z)^{2} \exp \left\{ -\theta (\log 4) \( -\sqrt{\log_2 x} + \tfrac12\sqrt{\log_2 z} \) \right\}
 =o(M_1) \qquad (\theta\to-\infty).
\end{align*}
The theorem now follows upon comparing with \eqref{M1-asym}.

%
%
\section{Proof of Theorem \ref{main} lower bounds (i), (ii), and (iii) when $-\theta$ is bounded}
%
%

In the proof, we will need to bound sums of the type
\[
 S(z,Y;w;\xi) := \ssum{a,b,c\le z \\ P^+(d)\le Y \\ \Omega(abcd)\le w \\ b>c} \frac{1}{abcd^{1-\xi}} 
 \pfrac{d}{\phi(d)}^2 \frac{b}{\phi(b)}\cdot \frac{c}{\phi(c)}\cdot \frac{b-c}{\phi(b-c)}.
\]
The only complicated part to take care of is the fraction $\frac{b-c}{\phi(b-c)}$.

\begin{lem}\label{bigsum}
 Suppose that $z\ge e^{3}$, $2\le Y\le z$, $1\le w\le 1.5\log_2 z$ and $0\le \xi \le \frac{1}{10\log Y}$.
 (i) If $Y \le \exp \{ (\log z)^{0.99} \}$, then
 \[
   S(z,Y;w;\xi) \ll (\log z)^5.
 \]
 (ii) If $Y \ge \exp \{ (\log z)^{0.99} \}$, then
 \[
  S(z,Y;w;\xi) \ll \frac{(4\log_2 z)^w}{w!}.
 \]
\end{lem}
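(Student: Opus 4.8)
\medskip
\noindent\textbf{Proof strategy.}
Part~(i) I will dispatch by entirely crude estimates; in fact the bound holds for every $Y\le z$, so the restriction $Y\le\exp\{(\log z)^{0.99}\}$ is not needed here. Dropping the conditions $\Omega(abcd)\le w$ and $b>c$ (but keeping $b\ne c$, so that $b-c$ makes sense), $S$ factors into the product of the sums over $a$, over $d$, and over $(b,c)$. Now $\sum_{a\le z}1/a\ll\log z$; the sum over $d$ is the Euler product $\prod_{p\le Y}\bigl(1+(p/(p-1))^2/(p^{1-\xi}-1)\bigr)$, and since $\xi\log Y\le 1/10$ forces $p^{-\xi}=1+O(\xi\log p)$ for $p\le Y$, each factor equals $1+\tfrac1p+O(p^{-2})+O(\tfrac{\xi\log p}{p})$, so the product is $\ll(\log Y)\exp\{O(\xi\log Y)\}\ll\log Y\le\log z$; and, using $n/\phi(n)\ll\log_2(10z)$ for $1\le n\le z$ together with $\sum_{n\le z}1/\phi(n)\ll\log z$, the sum over $(b,c)$ is $\ll(\log z)^2\log_2 z$. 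Multiplying gives $S\ll(\log z)^4\log_2 z\ll(\log z)^5$.

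For part~(ii) I would first use that $\Omega$ is completely additive, so $\Omega(abcd)=\Omega(a)+\Omega(b)+\Omega(c)+\Omega(d)$ (and every prime factor of $a,b,c$, resp. of $d$, is $\le z$, resp. $\le Y\le z$), and split $S$ according to $\Omega(a)=k_1,\dots,\Omega(d)=k_4$ with $k_1+k_2+k_3+k_4\le w$. By Lemma~\ref{Selberg} (with $\delta=\tfrac12$, legitimate since $k_1\le w\le\tfrac32\log_2 z$), $\sum_{a\le z,\ \Omega(a)=k_1}1/a\ll(\log_2 z)^{k_1}/k_1!$; by Lemma~\ref{recip} applied with $x=z:=Y$ and $c=2$, $\sum_{P^+(d)\le Y,\ \Omega(d)=k_4}(d/\phi(d))^2/d^{1-\xi}\ll(\log_2 Y)^{k_4}/k_4!\le(\log_2 z)^{k_4}/k_4!$. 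This last step is precisely where the hypothesis $Y\ge\exp\{(\log z)^{0.99}\}$ enters: it gives $\log_2 Y\ge0.99\log_2 z$, hence $k_4\le w\le1.5\log_2 z\le1.8\log_2 Y$, which Lemma~\ref{recip} requires (its condition $\xi\le\tfrac1{5\log Y}$ follows from $\xi\le\tfrac1{10\log Y}$). Thus it suffices to prove
\[
B(k_2,k_3):=\ssum{b,c\le z,\ b>c\\ \Omega(b)=k_2,\ \Omega(c)=k_3}\frac1{bc}\cdot\frac b{\phi(b)}\cdot\frac c{\phi(c)}\cdot\frac{b-c}{\phi(b-c)}\ \ll\ \frac{(\log_2 z)^{k_2}}{k_2!}\cdot\frac{(\log_2 z)^{k_3}}{k_3!},
\]
for then, summing the product of the four pieces over $k_1+\dots+k_4\le w$, the multinomial theorem yields $S\ll\sum_{j\le w}(4\log_2 z)^j/j!\ll(4\log_2 z)^w/w!$, since for $j\le w\le1.5\log_2 z$ the ratio of consecutive terms $\tfrac{4\log_2 z}{j+1}$ is bounded away from $1$ (we may assume $z$ large, as $S\ll1$ when $z$ is bounded).

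To bound $B(k_2,k_3)$, the plan is to expand $\tfrac b{\phi(b)}=\sum_{e_1\mid b}\tfrac{\mu^2(e_1)}{\phi(e_1)}$, and likewise $\tfrac c{\phi(c)}=\sum_{e_2\mid c}\tfrac{\mu^2(e_2)}{\phi(e_2)}$ and $\tfrac{b-c}{\phi(b-c)}=\sum_{e_3\mid b-c}\tfrac{\mu^2(e_3)}{\phi(e_3)}$, interchange summations, and for fixed squarefree $e_1,e_2,e_3$ analyze the conditions $e_1\mid b$, $e_2\mid c$, $e_3\mid b-c$ prime by prime. Partitioning the primes of $e_3$ into those dividing $e_1$, those dividing $e_2$ but not $e_1$, and the rest (call their product $g_3$), one obtains squarefree moduli $M_b,M_c$ built from $e_1,e_2,e_3$ with $M_b\mid b$, $M_c\mid c$, $(g_3,M_bM_c)=1$, and the single remaining condition $g_3\mid b-c$. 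Writing $b=M_bb'$, $c=M_cc'$, complete additivity gives $\Omega(b')=k_2-\omega(M_b)$, $\Omega(c')=k_3-\omega(M_c)$, and $g_3\mid M_bb'-M_cc'$ confines $b'$, for each fixed $c'$, to one residue class $\lambda(c')$ mod $g_3$. I would then sum over $c'$ by Lemma~\ref{Selberg}, and over $b'$ by first peeling off $(\lambda(c'),g_3)$ (since $g_3$ is squarefree, this reduces to a coprime residue class modulo $g_3/(\lambda(c'),g_3)$) and then applying Lemma~\ref{CCT} (valid as $\Omega(b')=m$ implies $\omega(b')\le m$) together with partial summation. This produces, for the inner sum, a bound of the shape $\tfrac1{M_bM_c}\cdot\tfrac{(\log_2 z)^{k_2-\omega(M_b)}}{(k_2-\omega(M_b))!}\cdot\tfrac{(\log_2 z)^{k_3-\omega(M_c)}}{(k_3-\omega(M_c))!}$ times an explicit $g_3$-factor; summing the weights $\tfrac{\mu^2(e_i)}{\phi(e_i)}$ against it, the factors $\tfrac1{M_b},\tfrac1{M_c}$ and the $g_3$-factor make the governing Euler product $\prod_p(1+O(p^{-2}))$ convergent, while the loss of $\Omega$-budget through $\omega(M_b),\omega(M_c)$ costs only a further convergent factor $\prod_p(1+O(1.5^{\omega}p^{-2}))$ (using $k_2,k_3\le1.5\log_2 z$ to get $\tfrac{k!}{(k-j)!\,(\log_2 z)^j}\le1.5^j$). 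This yields the desired bound on $B(k_2,k_3)$.

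The hard part will be exactly this last step. The delicate points are: extracting a reciprocal sum $\sum 1/b'$ over $b'$ in an arithmetic progression — whose modulus $g_3$ may be large and may depend on the outer variables — with $\Omega(b')$ prescribed, with enough uniformity; and doing so without losing a factor polylogarithmic in $\log z$, which would corrupt the clean constant $4$ — so the $\Omega$-restriction must be spent efficiently on $b'$ and $c'$ while the auxiliary divisor variables $M_b,M_c,g_3$ are kept at cost $O(1)$. A secondary nuisance, which I would absorb by crude bounds, is the range where $\Omega(b')$ or $\Omega(c')$ exceeds about $\log_2$ of the length of the sum in question (so that Lemma~\ref{Selberg} and partial summation degrade): such terms force $w$ to be large, where the target $(4\log_2 z)^w/w!$ is itself not small, so a crude bound is adequate there.
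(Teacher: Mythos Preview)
Your treatment of part~(i) matches the paper's: both simply drop the constraint $\Omega(abcd)\le w$, bound each factor $n/\phi(n)\ll\log_2 z$, and collect the resulting powers of $\log z$.

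For part~(ii) your plan is workable but takes a longer road than the paper. The paper's key device is Cauchy--Schwarz: it writes $S\le S_1^{1/2}S_2^{1/2}$, placing the awkward factor $\tfrac{b-c}{\phi(b-c)}$ (squared) entirely into $S_2$ and all the remaining weights $(d/\phi(d))^2,\ b/\phi(b),\ c/\phi(c)$ (squared) into $S_1$. Then $S_1$ is dispatched immediately by repeated use of Lemma~\ref{recip}, while in $S_2$ one expands $(f/\phi(f))^2=\sum_{l\mid f}g(l)$, truncates at $l_0=\lfloor\log^5 z\rfloor$, bounds the tail $l>l_0$ crudely, and for $l\le l_0$ applies Lemma~\ref{CCT} directly to the sum over $b\equiv c\pmod l$ --- no rewriting $b=M_b b'$ is needed, since there is now only the single congruence condition. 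Both routes arrive at the same multinomial sum $\sum_{j\le w}(4\log_2 z)^j/j!$.

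Your direct expansion of all three $n/\phi(n)$ factors introduces three divisor variables $e_1,e_2,e_3$ and entangled moduli $M_b,M_c,g_3$, making the bookkeeping heavier. Two points you flag but do not resolve are real: first, Lemma~\ref{CCT} requires the modulus to be smaller than the range, so large $g_3$ must be truncated away and handled by a separate crude bound (the paper does exactly this for $l>l_0$); second, your ``peel off $(\lambda(c'),g_3)$'' step makes the effective modulus depend on $c'$, so the order of summation and the $\Omega$-budget accounting need more care than you indicate. Also note that Lemma~\ref{CCT} controls $\omega\le k$ rather than $\Omega=k$, so after partial summation the $b'$-sum produces $\sum_{j\le k_2}(\log_2 z)^j/j!$ rather than the single term you wrote for $B(k_2,k_3)$; this is harmless (the paper absorbs the resulting extra factor $(w-v+1)\ll 2^{w-v}$ at the end), but your claimed bound on $B(k_2,k_3)$ is slightly too clean as stated. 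None of this is fatal, but the Cauchy--Schwarz split sidesteps almost all of it.
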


\begin{proof}
Part (i) is immediate from the elementary bounds $n/\phi(n) \ll \log_2 P^+(n)$, $\sum_{n\le z} 1/n \ll \log z$ and
\[
 \sum_{P^+(d)\le Y} \frac{1}{d^{1-\xi}} \ll  \prod_{p\le Y} \(1 + \frac{p^\xi}{p} \)
 \le \exp \left\{ \sum_{p\le Y} \frac{1+O(\xi\log p)}{p} \right\} \ll \log Y,
\]
since $\xi \le \frac{1}{10\log Y} \le \frac{1}{10\log p}$ for all $p\le Y$.

For part (ii), first apply Cauchy's inequality, and get that  $S(z,Y;w;\xi) \le S_1^{1/2} S_2^{1/2}$, where
\[
S_1=\sum_{a,b,c,d} \frac{1}{abcd^{1-2\xi}} \pfrac{d}{\phi(d)}^4 \pfrac{b}{\phi(b)}^2 \pfrac{c}{\phi(c)}^2, \quad 
S_2 = \sum_{a,b,c,d} \frac{1}{abcd}\pfrac{b-c}{\phi(b-c)}^2,
\]
where in each sum we have the same conditions on $a,b,c,d$.
We may quickly deal with $S_1$ using Lemma \ref{recip} repeatedly.
\be\label{S1}
 S_1 \ll \sum_{r+s+t+u\le w} \frac{(\log_2 z)^{r+s+t}(\log_2 Y)^u}{r!s!t!u!} \le \sum_{j\le w} \frac{(4\log_2 z)^j}{j!} \ll 
 \frac{(4\log_2 z)^w}{w!}.
\ee
where we  used the lower bound on $Y$ which implies that $u\le w \le 1.5\log_2 z \le 1.8\log_2 Y$ and $Y\ge e^2$.

For $S_2$, write $(\frac{f}{\phi(f)})^2 = \sum_{l|f} g(l)$, where $g$ is multiplicative, supported on squarefree numbers and
$g(p)=\frac{2p-1}{(p-1)^2}$ for primes $p$.  Let $l_0 = \lfloor \log^5 z \rfloor$.  Recalling that $(b,c)=1$, we then have
\begin{align*}
 S_2 & = \sum_{l} g(l) \ssum{a,b,c,d \\ b>c, l|(b-c)} \frac{1}{abcd} \\
 &=\sum_{l\le l_0} g(l) \sum_{r+s+t+u\le w} \ssum{a\le z\\\Omega(a)=r} \frac{1}{a} \ssum{P^+(d)\le Y\\ \Omega(d)=s} \frac{1}{d}
\ssum{c\le z\\\Omega(c)=u \\ (c,l)=1} \frac{1}{c} \; \ssum{c<b\le z \\ \Omega(b)=t \\ b\equiv c \pmod{l}} \frac{1}{b} + E,
 \end{align*}
where the ``error term'' $E$ satisfies
\begin{align*}
 E &\le \sum_{l>l_0} g(l) \ssum{a,b,c\le z, P^+(d)\le z \\ b>c, l|(b-c)} \frac{1}{abcd} \\
 &\ll \sum_{l>l_0} \frac{3^{\omega(l)}}{l} (\log z)^2 \sum_{c\le z} \frac{1}{c} \ssum{c<b\le z\\ b\equiv c\pmod{l}} \frac{1}{b} \\
 &\ll  \sum_{l>l_0} \frac{3^{\omega(l)}}{l} \frac{(\log z)^4}{l} \ll \frac{(\log z)^4}{l_0^{0.99}} \ll 1.
\end{align*}
By Lemma \ref{CCT} and partial summation (since $\Omega(c)=u$ implies that $\omega(c)\le u$),
\[
\ssum{c<b\le z \\ \Omega(b)=t \\ b\equiv c \pmod{l}} \frac{1}{b} \ll \int_{c+l}^z \frac{1}{\phi(l)s\log(10s/l)}\sum_{j=0}^{t-1} \frac{(\log_2 (10s/l))^j}{j!}\, ds
\ll \frac{1}{\phi(l)} \sum_{j=0}^t \frac{(\log_2 y)^j}{j!}.
\]
Applying Lemma \ref{Halup} to the sums over $a,b$ and $d$ in $S_2$, we obtain that
\begin{align*}
 S_2 &\ll \sum_{l\le l_0} \frac{g(l)}{\phi(l)} \sum_{r+s+t+u\le w} \sum_{j=0}^t \frac{(\log_2 y)^{r+s+u+j}}{r!s!u!j!}\\
 &\ll \sum_{r+s+u+j \le w} \frac{(\log_2 y)^{r+s+u+j}}{r!s!u!j!}(w-r-s-u-j+1) \\
 &= \sum_{v\le w} (w-v+1) \frac{(4\log_2 y)^v}{v!}.
\end{align*}
Since $w-v+1 \ll 2^{w-v}$, we quickly arrive at
\be\label{S2}
 S_2 \ll \frac{(4\log_2 z)^w}{w!} + 1 \ll \frac{(4\log_2 z)^w}{w!}.
\ee
Combining \eqref{S1} and \eqref{S2} gives the lemma.
\end{proof}

Now we prove the lower bound in Theorem \ref{main}.
Write $z=\min(y,x/y)$, $\gamma=\min(1,\frac{1}{\a \log 4})$ and put $w=2\fl{\gamma \log_2 z}$.
Notice that $\gamma=\frac{1}{\a \log 4}$ unless $\theta\le 0$ and we are in case (iii).

We may assume without loss of generality that (a) $y\le x^{1/11}$ or that (b) $y\ge x^{10/11}$, for if 
$x^{1/11} < y < x^{10/11}$, we have $N(x,y) \ge N(x,x^{10/11})$ and the result follows from the lower bound
for the case $y=x^{10/11}$.  Consequently, in either case we have $z\le x^{1/11}$.
We may also assume that $x$ and $y$ are sufficiently large so that $z\ge e^3$.

Let $r(n)$ denote the number of ways to factor $n$ as $n=(p-1)m$, where $p$ is prime, $y<p<y^{1.1}$, 
$\Omega((p-1)m,z)\le w$, and furthermore we have in case (a) that $m=kh$ with $k\le y^{1/10}$ and 
$P^-(h)>y^{1.1}$.

By Cauchy's inequality,
\be\label{Cauchy}
 N(x,y) \ge \# \{ n\le x: r(n)>0 \} \ge \frac{M_1^2}{M_2}, \quad M_1:=\sum_{n\le x} r(n), \quad M_2:=\sum_{n\le x} r(n)^2.
\ee
First we bound $M_1$ from below.  Start with case (a).
Given $p$ and $k$, we have $(p-1)k \le y^{1.2} \le x^{0.2}$, so by Lemma \ref{rough},
the number of possible choices for $h$ is $\gg \frac{x}{pk \log y}$.  Hence
\[
 M_1 \gg \frac{x}{\log y} \ssum{y<p\le y^{1.1} \\ k\le y^{1/10} \\ \Omega(k(p-1))\le w} \frac{1}{kp}.
\] 
We note that $\a = 1-O(1/\log_2 x)$ and $w=\frac{\log_2 y}{\log 2}+O(1)$.
Consider numbers with $\Omega(k)=m_1$, $\Omega(p-1)=m_2$
and $m_1+m_2\in \{w-2,w-1,w\}$.  With $m_1 \le 0.9w-2$ fixed, Lemma \ref{Timlow} implies that
\[
 \ssum{y<p\le y^{1.1} \\ w-2-m_1\le  \Omega(p-1) \le w-m_1}  \frac{1}{p} \gg \frac{(\log_2 y)^{w-2-m_1}}{(w-2-m_1)! \log y}.
\]
By Lemma \ref{Selberg},
\[
 \ssum{k\le y^{1/10} \\  \Omega(k) = m_1} \frac{1}{k} \gg \frac{(\log_2 y)^{m_1}}{m_1!}
\]
uniformly in $m_1$.  
Putting these bounds together and summing on $m_1$, we obtain
\be\label{M1-i}
\begin{split}
M_1 &\gg \frac{x(\log_2 y)^{w-2}}{\log^2 y} \sum_{1\le m_1\le 0.9w-2} \frac{1}{m_1! (w-2-m_1)!} \\
&\gg \frac{x (\log_2 y)^{w-2} 2^{w-2}}{(\log^2 y) (w-2)!}
\gg \frac{x}{(\log y)^{\delta} \sqrt{\log_2 y}}.
\end{split}
\ee

In case (b), we similarly use  Lemmas \ref{Selberg} and \ref{Timlow} to bound separately the number of $n$ with
$\Omega(m)=j$ and $\Omega(p-1,z)\in\{k-2,k-1,k\}$  with $j+k\le w$.  This gives
\begin{align*}
 M_1 &\ge \frac13 \ssum{j+k\le w \\ 0.1w \le k\le 0.9w} \ssum{m\le z \\ \Omega(m)= j} \#\{p\le x/m: \Omega(p-1,z)\in \{k-2,k-1,k\} \} \\
 &\gg \ssum{j+k\le w \\ 0.1w \le k\le 0.9w} \frac{x (\log_2 z)^{j+k-2}}{(\log x) (\log z) j! (k-2)!}.
\end{align*}
Next, gather together the summands with $j+k=l$ for fixed $l\ge w/2$.  We obtain
\be\label{M1-ii}
\begin{split}
M_1 &\gg  \frac{x}{(\log x) (\log z)} \sum_{w/2\le l\le w-2} \frac{(\log_2 z)^l}{l!} \ssum{k\le l \\ 0.1w \le k\le 0.9w} \binom{l}{k-2} \\
&\gg  \frac{x}{(\log x) (\log z)} \sum_{w/2\le l\le w-2} \frac{(2\log_2 z)^l}{l!} \\
&\gg \frac{x}{\log x (\log z)} \frac{e^{(2\log_2 z)(1- Q(\gamma))}}{\max(1,\theta)} \\
&= \frac{x}{\max(1,\theta)(\log x)(\log z)^{1+2\gamma\log\gamma-2\gamma}}.
\end{split}
\ee

We next bound from above the quantity $M_2'=M_2-M_1$.  In case (a), $M_2'$ 
 counts the number of solutions of
\[
 (p_1-1)k_1 h = (p_2-1)k_2 h \le x,
\]
with $p_j$ prime, $y<p_j<y^{1.1}$,
$k_j<y^{1/10}$, $\Omega(k_j)+\Omega(p_j-1) \le w$\;  $(j=1,2)$,
$P^-(h)>y^{1.1}$ and $p_1\ne p_2$.  We may assume that $p_1<p_2$.  Given $p_1,p_2,k_1,$ and $k_2$, 
there are 
$$O\pfrac{x}{(p_1-1) k_1 \log y}$$ choices for $h$ by Lemma \ref{rough}.
Write $a=(k_1,k_2)$, put $k_1=ab$, $k_2=ac$,
$g=(p_1-1,p_2-1)$ so that $p_1-1=cg$, $p_2-1=bg$.   Note that $g> y^{9/10}$ and $b> c$.  
Let $t=P^+(g)$, $g= t d$.
Then $(p_1-1)k_1=abcd t$.  Suppose that $T \le t < 2T$, where $T$ is a power of 2.
by Lemma \ref{primecor}, if $a,b,c,d,T$ are fixed, the number of $t$ such that $t$, $cdt+1$ and
$bdt+1$ are all prime is bounded above by
\[
 \ll \frac{T}{\log^3 T} \pfrac{d}{\phi(d)}^2 \frac{b}{\phi(b)}\cdot \frac{c}{\phi(c)}\cdot \frac{b-c}{\phi(b-c)}.
\]
If $T\ge y^{0.1}$, set $\xi=0$, and otherwise let $\xi=\frac{1}{10\log(2T)}$.
In the latter case  $d\ge y^{0.8}$ and thus in either case we have
\[
 \frac{1}{d} \le \frac{1}{y^{0.8\xi}} \cdot \frac{1}{d^{1-\xi}}.
\]
Hence, 
\[
M_2' \ll \frac{x}{\log y} \sum_{T=2^j\le y^{1.1}} \frac{1}{y^{0.8\xi}\log^3 T} S(y^{1.1};2T;w;\xi).
 \]
If $T < \exp \{ (\log y)^{0.99} \}$, then $y^{0.8\xi} > (\log y)^{100}$, hence by Lemma \ref{bigsum},
these summands contribute  $O(x/\log^{80} y)$ to
 the above right side.  For each $T$ satisfying
$T \ge \exp \{ (\log y)^{0.99} \}$, we see from Lemma  \ref{bigsum} that
\[
\frac{S(y^{1.1};2T;w;\xi)}{y^{0.8\xi}\log^3 T} \ll \frac{x\log^{2-\delta} y}{\sqrt{\log_2 y}} \; \frac{1}{(\log^3 T) e^{0.08 \log y/\log (2T)}}.
\]
Summing over $T$ which are powers of two, we get
\be\label{M2-i}
 M_2' \ll \frac{x}{(\log y)^\delta \sqrt{\log_2 y}}.
\ee

In case (b), $M_2'$ equals twice the number of solutions of the equation
\[
(p_1-1)m_1=(p_2-1)m_2 \le x,
\]
with $p_i>y$, $p_1 < p_2$, $m_i\le z=x/y$, and $\Omega((p_i-1)m_i,z)\le w$ for $i=1,2$. 
As in case (i), we write $a=(m_1,m_2)$, $m_1=ab$, $m_2=ac$, and note that $b>c$.
Also write $(p_1-1,p_2-1)=dh$, where $P^+(d)\le z < P^-(h)$.  Then $p_1-1=cdh$ and $p_2-1=bdh$.
There are two cases to consider:
 $d\le \sqrt{x}$ and $d>\sqrt{x}$.
If $d\le \sqrt{x}$, we have $abcd \le x^{1/2} z^2 \le x^{15/22}$.  Using Lemma \ref{primecor} to bound the number of 
possible $h$ for a given quadruple $(a,b,c,d)$, and then applying Lemma \ref{bigsum} and Stirling's formula, 
we find that
the number of solutions in this case is bounded by
\be\label{smalld}
\begin{split}
  &\ll \frac{x}{(\log z)(\log^2 x)} S(z;z;w;0) \\
  &\ll \frac{x}{(\log z)(\log^2 x)}\, \frac{(4\log_2 z)^w}{w!} \\
  &\ll \frac{x}{(\log x)^2 (\log z)^{1+2\g\log \g-2\g-2\g\log 2}\sqrt{\log_2 z}}.
\end{split}
\ee
Now assume $d>\sqrt{x}$ and let $t=P^+(d)$, $d=d't$.  We further subdivide into two subcases.  If 
$abcd'h\le x^{3/4}$, then by Lemma \ref{primecor}, for each quintuple $(a,b,c,d',h)$ the number of 
possible $t\le \frac{x}{abcd'h}$ with $t$, $tbd'h+1$ and $tcd'h+1$ all prime (with $b>c$) is
\[
 \ll \frac{x}{abcd' h\log^3 x}\, \pfrac{d'}{\phi(d')}^2 \frac{b}{\phi(b)}\cdot \frac{c}{\phi(c)}\cdot \frac{b-c}{\phi(b-c)}.
\]
Summing over al possible $a,b,c,d',h$ we see that the above is $\ll \frac{x}{(\log^2 x)(\log z)} S(z;z;w;0)$
and we get the same bound as in \eqref{smalld} for the number of solutions.
When $abcd'h>x^{3/4}$, we note that $t\ge P^+(d')$ and thus $abcd'h \le x/P^+(d')$.  
By Lemma \ref{primecor}, for each quintuple $(a,b,c,d',h)$ the number of 
possible $t$ is
\[
 \ll \frac{x}{abcd' h\log^3 P^+(d')}\, \pfrac{d'}{\phi(d')}^2 \frac{b}{\phi(b)}\cdot \frac{c}{\phi(c)}\cdot \frac{b-c}{\phi(b-c)}.
\]
Suppose that $V<P^+(d')\le V^2$, where $V$ is of the form $V=x^{1/2^l}$ for some positive ineger $l$.
Put $\xi=\frac{1}{10\log (V^2)}$.  Since $t<x^{1/4}$,  $d'>x^{1/4}$  and it follows that
\[
 \frac{1}{d'} \le \frac{1}{x^{\xi/4} (d')^{1-\xi}}.
\]
By Mertens estimate,
\[
 \ssum{h\le x \\ P^-(h)>z} \frac{1}{h} \ll \frac{\log x}{\log z}.
\]
Summing over all possible $a,b,c,d'$, we find that the total number of solutions counted in this subcase is 
at most
\[
 \ll \frac{x\log x}{\log z} \sum_{V=x^{1/2^l}\le z^2} \frac{1}{x^{\xi/4} \log^3 V} S(z;V^2;w;\xi).
\]
When $V\le \exp \{ (\log z)^{0.99} \}$, $x^{\xi/4} > (\log x)^{100}$ and the number of solutions is 
$O(x/\log^{80} x)$ by Lemma \ref{bigsum}.  Otherwise, using Lemma \ref{bigsum}, the number is bounded above by
\begin{align*}
&\ll \frac{x\log x}{\log z} \sum_{V=x^{1/2^l}\le z^2} \frac{1}{x^{\xi/4}\log^3 V} \frac{(4\log_2 z)^w}{w!}\\
&\ll \frac{x}{(\log x)^2 (\log z)^{1+2\g\log \g-2\g-2\g\log 2}\sqrt{\log_2 z}} 
\sum_{l\ge 1} \frac{2^{3l}}{\exp\{2^{l-7} \}} \\
&\ll \frac{x}{(\log x)^2 (\log z)^{1+2\g\log \g-2\g-2\g\log 2}\sqrt{\log_2 z}}.
\end{align*}
Recalling the bound \eqref{smalld} for the number of solutions with $d \le \sqrt{x}$, we find that
\be\label{M2-ii}
 M_2' \ll \frac{x}{(\log x)^2 (\log z)^{1+2\g\log \g-2\g-2\g\log 2}\sqrt{\log_2 z}} \le
 \frac{x}{(\log x) (\log z)^{1+2\g\log \g-2\g}\sqrt{\log_2 z}}.
\ee
since $\g \le \frac{1}{\a \log 4}$.

Inserting \eqref{M1-i} and \eqref{M2-i} into \eqref{Cauchy} gives the desired bound for part (i).
Inserting \eqref{M1-ii} and \eqref{M2-ii} into \eqref{Cauchy} gives the desired bound for part (ii),
and also handles the case when $-\theta$ is bounded in part (iii).



\begin{thebibliography}{99}



\bibitem{CCT} T. H. Chan, S. K.-K. Choi and K. M. Tsang, {\it An extension to the Brun-Titchmarsh theorem},
Quart. J. Math. Oxford {\bf 62} (2011), 307--322.

\bibitem{EW} P. Erd\H os and S. S. Wagstaff, Jr., \textit{The fractional parts of the Bernoulli numbers},
Illinois J. Math. \textbf{24} (1980), 104--112.

\bibitem{F}  K. Ford, {\it Integers with a divisor in a given interval}, Ann. Math. 
{\bf 168} (2008),  367--433.


\bibitem{Ha} G. Hal\'asz,  {\it Remarks to my paper: ``On the distribution of additive 
and the mean values of multiplicative arithmetic functions''.}
Acta Math. Acad. Sci. Hungar. 23 (1972), 425--432.

\bibitem{HR} H. Halberstam and H.-E. Richert, {\it Sieve Methods},
Academic Press, London , 1974.


\bibitem{HT}
R. R. Hall, G. Tenenbaum, {\em Divisors,} Cambridge
Tracts in mathematics vol. {\bf 90}, 1988.



\bibitem{MPP} N. McNew, P. Pollack and C. Pomerance, {\it Numbers divisible by a large shifted prime and
large torsion subgroups of CM elliptic curves}, preprint, November 2015.

\bibitem{Nor} 
K. K. Norton, {\it On the number of restricted prime factors of an integer. I.}
Illinois J. Math. {\bf 20} (1976), no. 4, 681--705. 

\bibitem{Ten}
G. Tenenbaum, {\it Introduction to analytic and probabilistic number
theory\/}, 3rd ed., Cambridge Univ. Press, 2014.


\bibitem{T} N. M. Timofeev, {\it Hardy-Ramanujan and H\'alasz type inequalities for shifted prime numbers},
Math. Notes {\bf 57} (1995), 522--535.

\end{thebibliography}
\end{document}